\newtheorem{defn}{Definition}
\newtheorem{lemma}{Lemma}
\newtheorem{pro}{Proposition}
\newtheorem{rk}{Remark}
\numberwithin{equation}{section} \setcounter{tocdepth}{1}
\begin{document}

\vspace{0.5in}
\renewcommand{\bf}{\bfseries}
\renewcommand{\sc}{\scshape}
\vspace{0.5in}

\title[Discrete dynamics of two phytoplankton-zooplankton systems]{Discrete dynamics of phytoplankton-zooplankton systems with Holling type functional responses}

\author{S.K. Shoyimardonov, U.A. Rozikov}

\address{S.K. Shoyimardonov$^{a,b}$ \begin{itemize}
\item[$^a$] V.I.Romanovskiy Institute of Mathematics, 9, University str.,
Tashkent, 100174, Uzbekistan;
\item[$^b$] National University of Uzbekistan,  4, University str., 100174, Tashkent, Uzbekistan.
\end{itemize}}
\email{shoyimardonov@inbox.ru}

\address{ U.A. Rozikov$^{a,b,c}$\begin{itemize}
		\item[$^a$] V.I.Romanovskiy Institute of Mathematics,  9, Universitet str., 100174, Tashkent, Uzbekistan;
		\item[$^b$]  National University of Uzbekistan,  4, Universitet str., 100174, Tashkent, Uzbekistan;
		\item[$^c$] Karshi State University, 17, Karabag str., 180100, Karshi,  Uzbekistan.
\end{itemize}}
\email{rozikovu@yandex.ru}





\keywords{discrete dynamics, Holling type, fixed points, phytoplankton, zooplankton, bifurcation}

\subjclass[2010]{34D20 (92D25)}

\begin{abstract} In this paper, we study discrete dynamics of phytoplankton-zooplankton system with Holling type II and Holling type III predator functional responses. The stability of positive fixed points are investigated. By finding invariant sets, the convergence of the trajectory was shown under certain conditions on the parameters.
\end{abstract}

\maketitle

\section{Introduction}

The study of ecosystem dynamics is always an important and interesting research object in the theory of dynamic systems.  Phytoplankton is very important as it contributes almost half of the photosynthesis flow on our planet and absorbs a third of the carbon dioxide.  Zooplankton feed on toxins, phytoplankton, and they form the base of the ocean food chain.  Thus, phytoplankton-zooplankton interactions have been studied by many researchers based on various models (\cite{Chatt, Chen, Hong, Tian, RSH, RSHV, SH, Qiu}) and references therein. Although the study of discrete dynamics of a system is relatively difficult, it is important and interesting in many cases.

The following continuous-time phytoplankton-zooplankton model suggested by the authors of \cite{Chatt}:
\begin{equation}\label{chat}
\left\{\begin{aligned}
&\frac{dP}{dt}=bP(1-\frac{P}{k})-\alpha f(P)Z,\\
&\frac{dZ}{dt}=\beta f(P)Z-rZ-\theta g(P)Z,
\end{aligned}\right.
\end{equation}
where $P$ is the density of phytoplankton and $Z$ is the density of the zooplankton population; $\alpha> 0$ and $\beta> 0$
are predation and conversion rates of the zooplankton on the phytoplankton population, respectively; $b > 0$ is the growth rate, $k > 0$ is carrying capacity of the phytoplankton; $r > 0$ is the death rate of the zooplankton; $f(P)$ represents the predator functional response; $g(P)$ represents the distribution of the toxin substances; $\theta> 0$ denotes the rate of toxin liberation by the phytoplankton population. Authors of \cite{Chatt} analyzed the local stability of the model (\ref{chat}) with different kinds of $f(u),$  $g(u)$  and proved that the release of toxin producing plankton can stop plankton blooms.

In the paper \cite{Chen}, authors investigated the model (\ref{chat}) in continuous-time by choosing $f(u)=\frac{u^h}{1+cu^h}$ (for $h = 1, 2$), $g(u) = u$ and simplifying as
$$\overline{t}=bt, \, \overline{u}=\frac{P}{k}, \, \overline{v}=\frac{\alpha k^{h-1}Z}{b}, \, \overline{c}={ck^h}, \, \overline{\beta}=\frac{\beta k^h}{b}, \, \overline{r}=\frac{r}{b}, \, \overline{\theta}=\frac{\theta k}{b}.$$

The model (\ref{chat}) takes the following form:
\begin{equation}\label{chenn}
\left\{\begin{aligned}
&\frac{du}{dt}=u(1-u)-\frac{u^hv}{1+cu^h}\\
&\frac{dv}{dt}=\frac{\beta u^hv}{1+cu^h}-rv-\theta uv.\\
\end{aligned}\right.
\end{equation}
Here  $f(u)=\frac{u}{1+cu}$  is the Holling type II predator functional response, and $f(u)=\frac{u^2}{1+cu^2}$  is the Holling type III predator functional response. In the case $\theta=0$ the global dynamics of the system (\ref{chenn})  is well studied by many mathematicians (\cite{Chen2, Hsu1, Hsu3,Ko,Peng,Wang,Zhou}). For $\theta>0$ the authors of \cite{Chen} obtained important results:
For $h=1$, it was shown that the local stability of the positive equilibrium implies global stability if there exists a unique positive equilibrium. When there exist two positive equilibria, the local stability of the positive equilibrium with small phytoplankton population density implies that the model occurs bistable
phenomenon. These results also improved  for $h=2$ under certain conditions. In \cite{SH} author considered discrete version of the model (\ref{chenn}) for $h=1$ and  proved the occurrence of Neimark-Sacker bifurcation at a positive fixed point.

In this paper, we investigate global dynamics of the system (\ref{chenn}) in discrete time.  The paper is organized as follows. In the Section 2 and Section 3, we study the global dynamics of system (\ref{chenn}) for the case $h=1$ and $h=2$ respectively. For $h=2$ we also study the types of positive fixed points and based on obtained results, we remark that the system undergoes a Neimark-Sacker bifurcation at one positive fixed point. Numerical simulations are given to illustrate the theoretical results.

Let's consider discrete-time version of the model (\ref{chenn}), which has the following form
\begin{equation}\label{h12}
V:
\begin{cases}
u^{(1)}=u(2-u)-\frac{u^hv}{1+cu^h}\\[2mm]
v^{(1)}=\frac{\beta u^hv}{1+cu^h}+(1-r)v-\theta uv.
\end{cases}
\end{equation}
In the sequel of the paper we assume that all parameters $\beta, r, \theta, c$ are positive.
It is obvious that, system (\ref{h12}) always has two nonnegative equilibria $(0; 0)$ and $(1; 0)$. From $v^{(1)}=v$ we get that $(u; v)$ is a positive fixed point  of system (\ref{h12}) if and only if  $u\in(0; 1)$ is a solution of the following equation (as in \cite{Chen}):

\begin{equation}\label{beta}
\beta=\Psi(u):=\frac{(r+\theta u)(1+c u^{h})}{u^{h}}.
\end{equation}

An easy calculation implies that (as in \cite{Chen}):

1. there exists $\overline{u}>0$ such that $\Psi'(\overline{u})=0,$ $\Psi'(u)<0$ for $u\in(0,\overline{u})$
and
$\Psi'(u)>0$ for $u\in(\overline{u},\infty);$

2. $\lim_{u\rightarrow0}\Psi(u)=\infty$ and $\Psi(1)=(1+c)(r+\theta).$

\medskip
In \cite{Chen} given the following useful lemmas.

\begin{lemma} (\cite{Chen}) Assume that $\overline{u}\geq1.$ Then system (\ref{h12}) has no positive fixed point for $\beta\leq(c+1)(r+\theta),$ and a unique positive fixed point for $\beta>(c+1)(r+\theta).$ \label{lemma1}
\end{lemma}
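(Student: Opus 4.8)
The plan is to reduce the statement entirely to counting solutions in the open interval $(0,1)$ of the scalar equation $\beta=\Psi(u)$, and then to read off the answer from the monotonicity of $\Psi$ recorded in items~1 and~2. First I would recall the fixed-point reduction already set up before (\ref{beta}): a positive fixed point $(u;v)$ of $V$ corresponds exactly to a point $u\in(0,1)$ with $\Psi(u)=\beta$, the second coordinate being forced by $u^{(1)}=u$ to be $v=(1-u)(1+cu^{h})/u^{h-1}$, which is positive precisely because $u\in(0,1)$ and $u>0$. Hence it suffices to count the zeros of $u\mapsto\Psi(u)-\beta$ on $(0,1)$.

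Next I would invoke the hypothesis $\overline{u}\ge 1$. By item~1, $\Psi'<0$ on $(0,\overline{u})\supseteq(0,1)$, so $\Psi$ is continuous and strictly decreasing on $(0,1)$; together with item~2, $\lim_{u\to 0^{+}}\Psi(u)=+\infty$ and $\Psi(1)=(1+c)(r+\theta)$. Therefore $\Psi$ restricts to a continuous strictly decreasing bijection $(0,1)\to\big((1+c)(r+\theta),+\infty\big)$; in particular $\Psi(u)>(1+c)(r+\theta)$ for every $u\in(0,1)$.

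The two cases are now immediate from the intermediate value theorem and strict monotonicity. If $\beta\le(1+c)(r+\theta)$, then $\Psi(u)>(1+c)(r+\theta)\ge\beta$ for every $u\in(0,1)$, so $\Psi(u)=\beta$ has no solution there and $V$ has no positive fixed point. If $\beta>(1+c)(r+\theta)$, then $\beta$ lies in the range of the above bijection, so there is exactly one $u^{*}\in(0,1)$ with $\Psi(u^{*})=\beta$, yielding the unique positive fixed point $\big(u^{*};\,(1-u^{*})(1+c(u^{*})^{h})/(u^{*})^{h-1}\big)$. I do not expect a genuine obstacle here: the only points needing care are that the relevant interval is $(0,1)$ rather than $(0,\infty)$ — which is exactly why the assumption $\overline{u}\ge 1$ is the right hypothesis, guaranteeing monotonicity of $\Psi$ on the whole of $(0,1)$ — and the verification that the induced $v$-coordinate is positive, which is precisely what pins the reduction to $(0,1)$.
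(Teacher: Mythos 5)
Your argument is correct and is exactly the intended one: the paper states this lemma without proof (citing \cite{Chen}), and the natural proof is precisely your reduction to counting solutions of $\beta=\Psi(u)$ on $(0,1)$ combined with the monotonicity facts 1--2 recorded just before the lemma, which under $\overline{u}\geq 1$ make $\Psi$ a strictly decreasing bijection from $(0,1)$ onto $\left((1+c)(r+\theta),\infty\right)$. Nothing further is needed.
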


\begin{lemma} (\cite{Chen}) Assume that $\overline{u}<1.$ Then system (\ref{h12}) has no positive fixed point for $\beta<\Psi(\overline{u}),$  a unique positive fixed point for $\beta\geq(c+1)(r+\theta)$ or  $\beta=\Psi(\overline{u}),$  and two positive fixed points for $\beta\in(\Psi(\overline{u}), (c+1)(r+\theta)).$ \label{lemma2}
\end{lemma}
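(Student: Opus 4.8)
The plan is to reduce the whole statement to counting solutions $u\in(0,1)$ of the scalar equation $\beta=\Psi(u)$ from \eqref{beta}. Indeed, by the discussion preceding Lemma \ref{lemma1}, a positive fixed point of $V$ is exactly a pair $(u;v)$ with $u\in(0,1)$ solving $\beta=\Psi(u)$; and once such a $u$ is fixed, the relation $u^{(1)}=u$ forces $v=(1-u)(1+cu^{h})/u^{h-1}$, which is uniquely determined and strictly positive precisely because $u\in(0,1)$. So positive fixed points are in bijection with roots of $\beta=\Psi(u)$ in $(0,1)$, and the lemma is a statement about the graph of $\Psi$ on $(0,1)$.

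First I would record the shape of $\Psi$ on $(0,1)$ from the two ``easy calculation'' facts. Since $\overline{u}<1$, the function $\Psi$ is continuous and strictly decreasing on $(0,\overline{u}]$ with $\Psi(u)\to+\infty$ as $u\to0^{+}$ and global minimum value $\Psi(\overline{u})$, and strictly increasing on $[\overline{u},1]$ from $\Psi(\overline{u})$ up to $\Psi(1)=(c+1)(r+\theta)$. In particular $\Psi(\overline{u})<\Psi(1)=(c+1)(r+\theta)$ (so the interval in the third case is nonempty), and $\Psi(u)\ge\Psi(\overline{u})$ for every $u\in(0,1)$, with equality only at $u=\overline{u}$.

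Then I would split into the stated ranges of $\beta$ and apply the intermediate value theorem together with strict monotonicity on each of the two pieces $(0,\overline{u})$ and $(\overline{u},1)$. If $\beta<\Psi(\overline{u})$ there is no root. If $\beta=\Psi(\overline{u})$ the only root is $u=\overline{u}$. If $\Psi(\overline{u})<\beta<(c+1)(r+\theta)$, then $\beta$ lies strictly between the endpoint values on each piece, giving exactly one root in $(0,\overline{u})$ and exactly one in $(\overline{u},1)$, i.e.\ two positive fixed points. Finally, if $\beta\ge(c+1)(r+\theta)=\Psi(1)$, the decreasing piece still contributes exactly one root in $(0,\overline{u})$ (as $\Psi$ runs from $+\infty$ down to $\Psi(\overline{u})\le\Psi(1)\le\beta$), while on $(\overline{u},1)$ one has $\Psi(u)<\Psi(1)\le\beta$, so no root there; hence exactly one positive fixed point. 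Collecting the cases yields the lemma.

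The only point requiring a little care—and the one I would flag as the main (very mild) obstacle—is the borderline value $\beta=(c+1)(r+\theta)$: here $\Psi(1)=\beta$ but $u=1$ is excluded (it corresponds to the boundary equilibrium $(1;0)$, not to a positive fixed point), so one must check both that the open interval $(\overline{u},1)$ contributes no root and that the root in $(0,\overline{u})$ genuinely persists; both follow from the strict inequality $\Psi(\overline{u})<\Psi(1)$. Everything else is a routine application of continuity and strict monotonicity, with no further estimates needed.
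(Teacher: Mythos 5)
Your proposal is correct and follows exactly the route the paper sets up (and attributes to \cite{Chen}): positive fixed points are identified with roots of $\beta=\Psi(u)$ in $(0,1)$, and the count follows from the monotonicity of $\Psi$ on $(0,\overline{u})$ and $(\overline{u},1)$ together with $\Psi(u)\to\infty$ as $u\to0^{+}$ and $\Psi(1)=(c+1)(r+\theta)$. Your handling of the borderline case $\beta=(c+1)(r+\theta)$, where $u=1$ must be excluded, is a correct and welcome detail that the paper leaves implicit.
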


\begin{defn}\label{def1} Let $E(x,y)$ be a fixed point of the operator $F:\mathbb{R}^{2}\rightarrow\mathbb{R}^{2}$ and $\lambda_1, \lambda_2$ are eigenvalues of the Jacobian matrix $J=J_{F}$ at the point $E(x,y).$

(i) If $|\lambda_1|<1$ and $|\lambda_2|<1$ then the fixed point $E(x,y)$ is called an \textbf{attractive} or \textbf{sink};

(ii) If $|\lambda_1|>1$ and $|\lambda_2|>1$ then the fixed point $E(x,y)$ is called  \textbf{repelling} or \textbf{source};

(iii) If $|\lambda_1|<1$ and $|\lambda_2|>1$ (or $|\lambda_1|>1$ and $|\lambda_2|<1$) then the fixed point $E(x,y)$ is called  \textbf{saddle};

(iv) If either $|\lambda_1|=1$ or $|\lambda_2|=1$ then the fixed point $E(x,y)$ is called to be  \textbf{non-hyperbolic};
\end{defn}

\section{Case $h=1$}

In this case the operator (\ref{h12})  has the following form
\begin{equation}\label{h1}
V_{1}:
\begin{cases}
u^{(1)}=u(2-u)-\frac{uv}{1+cu}\\[2mm]
v^{(1)}=\frac{\beta uv}{1+cu}+(1-r)v-\theta uv.
\end{cases}
\end{equation}

\subsection{Fixed points and their stability}

\begin{pro}\label{prop1}(\cite{SH}) For the fixed points $E_0=(0,0)$ and $E_1=(1,0)$ of (\ref{h1}),  the following statements hold true:
$$E_{0}=\left\{\begin{array}{lll}
{\rm nonhyperbolic}, \ \ {\rm if} \ \  r=2\\[2mm]
{\rm saddle}, \ \ \ \ {\rm if} \ \  0<r<2\\[2mm]
{\rm repelling}, \ \ \ \  {\rm if}  \ \ r>2,
\end{array}\right.$$
$$E_{1}=\left\{\begin{array}{lll}
{\rm nonhyperbolic}, \ \ {\rm if} \ \  r+\theta=\frac{\beta}{1+c} \ \ {\rm or} \ \ r+\theta=2+\frac{\beta}{1+c}  \\[2mm]
{\rm attractive}, \ \ \ \ {\rm if} \ \  \frac{\beta}{1+c}<r+\theta<2+\frac{\beta}{1+c}\\[2mm]
{\rm saddle}, \ \ \ \  {\rm if}  \ \  {\rm otherwise}.
\end{array}\right.$$
\end{pro}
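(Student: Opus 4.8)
The plan is to compute the Jacobian matrix of $V_1$ and evaluate it at each of the two boundary fixed points, then read off the eigenvalues and apply Definition~\ref{def1}. Writing $V_1(u,v) = (F_1(u,v), F_2(u,v))$ with $F_1 = u(2-u) - \frac{uv}{1+cu}$ and $F_2 = \frac{\beta uv}{1+cu} + (1-r)v - \theta uv$, the key observation is that at both $E_0$ and $E_1$ we have $v = 0$, so every term in the partial derivatives that carries a factor of $v$ drops out. In particular $\partial F_2/\partial u$ vanishes at any point with $v=0$, so the Jacobian becomes upper-triangular there and its eigenvalues are simply the diagonal entries $\lambda_1 = \partial F_1/\partial u$ and $\lambda_2 = \partial F_2/\partial v$.

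First I would handle $E_0 = (0,0)$. A direct differentiation gives $\partial F_1/\partial u = 2 - 2u - \frac{v}{(1+cu)^2}$, which equals $2$ at the origin, and $\partial F_2/\partial v = \frac{\beta u}{1+cu} + (1-r) - \theta u$, which equals $1-r$ at the origin. Thus $\lambda_1 = 2$ and $\lambda_2 = 1-r$. Since $|\lambda_1| = 2 > 1$ always, the classification of $E_0$ depends only on $|\lambda_2| = |1-r|$: this is $=1$ iff $r = 0$ or $r = 2$, but $r>0$ is assumed, so non-hyperbolicity occurs exactly when $r=2$; it is $<1$ iff $0 < r < 2$, giving a saddle; and $>1$ iff $r > 2$, giving a source. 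This matches the stated trichotomy.

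Next I would do $E_1 = (1,0)$. Here $\partial F_1/\partial u = 2 - 2u - \frac{v}{(1+cu)^2}$ evaluates to $2 - 2 = 0$, so $\lambda_1 = 0$, which has modulus $<1$ unconditionally. And $\partial F_2/\partial v = \frac{\beta u}{1+cu} + 1 - r - \theta u$ evaluates to $\lambda_2 = \frac{\beta}{1+c} + 1 - r - \theta = 1 - \bigl(r + \theta - \frac{\beta}{1+c}\bigr)$. Writing $A := r + \theta - \frac{\beta}{1+c}$, we have $\lambda_2 = 1 - A$, and $|\lambda_2| = 1$ iff $A = 0$ or $A = 2$, i.e. $r+\theta = \frac{\beta}{1+c}$ or $r+\theta = 2 + \frac{\beta}{1+c}$ (the non-hyperbolic case); $|\lambda_2| < 1$ iff $0 < A < 2$, i.e. $\frac{\beta}{1+c} < r+\theta < 2 + \frac{\beta}{1+c}$, and then both eigenvalues have modulus $<1$, so $E_1$ is attractive; otherwise $|\lambda_2| > 1$ while $|\lambda_1| = 0 < 1$, so $E_1$ is a saddle. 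This reproduces the claimed classification.

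There is no real obstacle here — the computation is routine precisely because the Jacobian triangularizes on the invariant line $\{v=0\}$. The only point requiring a little care is keeping the boundary cases $|\lambda_2|=1$ consistent with the standing hypothesis $r>0$ (which is why $r=0$ does not appear for $E_0$), and noting that at $E_1$ the eigenvalue $\lambda_1 = 0$ can never by itself force non-hyperbolicity or instability, so all branching is governed by $\lambda_2$ alone.
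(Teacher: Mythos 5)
Your proof is correct: the Jacobian of $V_1$ is upper-triangular at both boundary fixed points (since every entry of $\partial F_2/\partial u$ carries a factor of $v$), the eigenvalues $2,\,1-r$ at $E_0$ and $0,\,\frac{\beta}{1+c}+1-r-\theta$ at $E_1$ are computed correctly, and the case analysis via Definition~\ref{def1} reproduces the stated classification. The paper itself gives no proof, citing \cite{SH}, and your Jacobian--eigenvalue computation is exactly the standard argument behind that cited result.
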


It is easy to calculate that the solution of $\Psi'(u)=0$ is $\overline{u}=\sqrt{\frac{r}{c\theta}}.$ Moreover, $\Psi(\overline{u})=(\sqrt{\theta}+\sqrt{rc})^2.$ From Lemma \ref{lemma1} and Lemma \ref{lemma2} we get the following result.

\begin{pro}\label{prop2} For the operator (\ref{h1}) the following statements hold true:

(i) If $r\geq c\theta,$ $\beta>(r+\theta)(1+c)$ or $r<c\theta,$  $\beta\geq(r+\theta)(1+c)$ or $r<c\theta,$ $\beta=(\sqrt{\theta}+\sqrt{rc})^2$ then there exists a unique positive fixed point $E_{-}=(u_{-},v_{-}),$

(ii) If $r<c\theta$ and $(\sqrt{\theta}+\sqrt{rc})^2<\beta<(c+1)(r+\theta)$ then there exist two positive fixed points $E_{-}=(u_{-},v_{-})$ and $E_{+}=(u_{+},v_{+}),$

where
$$u_{\mp}=\frac{\beta-rc-\theta\mp\sqrt{(\beta-rc-\theta)^2-4cr\theta}}{2c\theta}, \ \ v_{\mp}=(1-u_{\mp})(1+cu_{\mp}).$$
\end{pro}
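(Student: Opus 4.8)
The plan is to translate Lemma~\ref{lemma1} and Lemma~\ref{lemma2} into the explicit parametrization for the case $h=1$, using the two facts already recorded: $\overline u=\sqrt{r/(c\theta)}$ and $\Psi(\overline u)=(\sqrt\theta+\sqrt{rc})^2$. First I would note that the dichotomy $\overline u\ge 1$ versus $\overline u<1$ is exactly the dichotomy $r\ge c\theta$ versus $r<c\theta$, since $\overline u=\sqrt{r/(c\theta)}\ge 1\iff r\ge c\theta$. Then Lemma~\ref{lemma1} applied with $\overline u\ge 1$ gives: no positive fixed point if $\beta\le(c+1)(r+\theta)$ and a unique one if $\beta>(c+1)(r+\theta)$; and Lemma~\ref{lemma2} applied with $\overline u<1$ gives: no positive fixed point if $\beta<\Psi(\overline u)=(\sqrt\theta+\sqrt{rc})^2$, a unique one if $\beta\ge(c+1)(r+\theta)$ or $\beta=(\sqrt\theta+\sqrt{rc})^2$, and two if $(\sqrt\theta+\sqrt{rc})^2<\beta<(c+1)(r+\theta)$. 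Collecting the "one positive fixed point" cases across the two regimes yields exactly the list in part~(i), and the "two positive fixed points" case yields part~(ii).

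The second and more computational step is to solve equation~(\ref{beta}) explicitly for $h=1$ to obtain the stated formulas for $u_\mp$ and $v_\mp$. With $h=1$, equation~(\ref{beta}) reads $\beta=\frac{(r+\theta u)(1+cu)}{u}$, i.e. $\beta u=(r+\theta u)(1+cu)=r+(rc+\theta)u+c\theta u^2$, which rearranges to the quadratic
\begin{equation*}
c\theta u^2-(\beta-rc-\theta)u+r=0.
\end{equation*}
Applying the quadratic formula gives $u=\dfrac{(\beta-rc-\theta)\mp\sqrt{(\beta-rc-\theta)^2-4cr\theta}}{2c\theta}$, which is precisely the stated $u_\mp$. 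For the $v$-coordinate, recall from the derivation of~(\ref{beta}) that a positive fixed point satisfies $u^{(1)}=u$, i.e. $u(2-u)-\frac{uv}{1+cu}=u$, hence $u(1-u)=\frac{uv}{1+cu}$, and cancelling $u>0$ gives $v=(1-u)(1+cu)$, so $v_\mp=(1-u_\mp)(1+cu_\mp)$.

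The last thing to check is consistency: that the roots $u_\mp$ really lie in $(0,1)$ under the stated hypotheses, so that they are genuine positive fixed points (and that $v_\mp>0$). Positivity of the product of roots $u_-u_+=r/(c\theta)=\overline u^{\,2}>0$ and of their sum $u_-+u_+=(\beta-rc-\theta)/(c\theta)$ (which is positive since in all listed cases $\beta$ is large enough) shows both roots are positive; that they are less than $1$ follows because $\Psi$ is decreasing then increasing with minimum at $\overline u$ and $\Psi(1)=(1+c)(r+\theta)$, so the relevant level set of $\Psi$ meeting the value $\beta$ sits in $(0,1)$ exactly in the regimes described — this is really just a restatement of Lemmas~\ref{lemma1}--\ref{lemma2} together with the location of $\overline u$. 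I expect the main (though still modest) obstacle to be bookkeeping: carefully matching each of the three sub-hypotheses in part~(i) to the correct branch of Lemma~\ref{lemma1} or Lemma~\ref{lemma2}, and in the borderline case $\beta=(\sqrt\theta+\sqrt{rc})^2$ observing that the discriminant $(\beta-rc-\theta)^2-4cr\theta$ vanishes so that $u_-=u_+=\overline u$, consistent with the "unique fixed point" claim there.
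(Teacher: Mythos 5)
Your proposal is correct and follows essentially the same route as the paper, which simply notes $\overline u=\sqrt{r/(c\theta)}$, $\Psi(\overline u)=(\sqrt{\theta}+\sqrt{rc})^2$ and invokes Lemmas~\ref{lemma1}--\ref{lemma2}, with the explicit $u_{\mp}$, $v_{\mp}$ coming from the quadratic $c\theta u^2-(\beta-rc-\theta)u+r=0$ and $v=(1-u)(1+cu)$. The only slight looseness is the phrase suggesting both roots lie in $(0,1)$: in the part~(i) regimes the larger root $u_{+}$ satisfies $u_{+}\geq 1$ (so only $E_{-}$ is a positive fixed point), but since you defer the counting to the cited lemmas this does not affect the argument.
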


We note that the characteristic equation of the Jacobian of the operator (\ref{h1}) has the form $F(\lambda, u)=\lambda^2-p(u)\lambda+q(u)=0,$ where
\begin{equation}\label{bif1}
p(u)=(1-u)\left(\frac{1+2cu}{1+cu}\right)+1, \ \ \ \ q(u)=(1-u)\left(\frac{1+2cu}{1+cu}\right)+u(1-u)\left(\frac{\beta}{(1+cu)^2}-\theta\right)
\end{equation}

\begin{lemma} (\cite{SH}) For the fixed point $E_{-}=(u_{-},v_{-})$ of the operator (\ref{h1}), the followings hold true
$$E_{-}=\left\{\begin{array}{lll}
{\rm attractive}, \ \ {\rm if} \ \  q(u_{-})<1\\[2mm]
{\rm repelling}, \ \ \ \ {\rm if} \ \  q(u_{-})>1\\[2mm]
{\rm nonhyperbolic}, \ \ \ \  {\rm if}  \ \ p(u_{-})<2, \ \ q(u_{-})=1.
\end{array}\right.$$
\end{lemma}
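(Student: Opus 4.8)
The plan is to analyze the characteristic polynomial $F(\lambda,u)=\lambda^2-p(u_-)\lambda+q(u_-)$ at the fixed point and apply the standard Jury (Schur--Cohn) stability criterion for quadratics. Recall that both roots of $\lambda^2-p\lambda+q$ lie strictly inside the unit disk if and only if $|p|<1+q<2$, i.e. $q<1$, $F(1)=1-p+q>0$ and $F(-1)=1+p+q>0$. So the first step is to show that for $E_-$ one always has $F(1)>0$ and $F(-1)>0$ on the relevant parameter range; once that is in hand, the attractive/repelling dichotomy is governed entirely by the sign of $q(u_-)-1$, and the non-hyperbolic case $q(u_-)=1$ with $p(u_-)<2$ is exactly the Neimark--Sacker (complex eigenvalues on the unit circle) situation, since $q<1$ already forces $F(1),F(-1)>0$.

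First I would compute $F(1,u)=1-p(u)+q(u)$ using the expressions in (\ref{bif1}). The terms $(1-u)\frac{1+2cu}{1+cu}$ cancel, leaving $F(1,u)=u(1-u)\left(\frac{\beta}{(1+cu)^2}-\theta\right)$. Then I would use that $u=u_-$ satisfies the fixed-point equation (\ref{beta}) with $h=1$, namely $\beta=\frac{(r+\theta u)(1+cu)}{u}$, to rewrite $\frac{\beta}{(1+cu)^2}=\frac{r+\theta u}{u(1+cu)}$, so that $\frac{\beta}{(1+cu)^2}-\theta=\frac{r+\theta u-\theta u(1+cu)}{u(1+cu)}=\frac{r-c\theta u^2}{u(1+cu)}$. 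Hence $F(1,u_-)=\frac{(1-u_-)(r-c\theta u_-^2)}{1+cu_-}$. Since $u_-\in(0,1)$, positivity of $F(1,u_-)$ reduces to $r-c\theta u_-^2>0$, i.e. $u_-<\sqrt{r/(c\theta)}=\overline u$; this holds because $u_-$ is the \emph{smaller} root and $\overline u$ is where $\Psi$ attains its minimum, so by the shape of $\Psi$ (decreasing on $(0,\overline u)$, increasing after) any positive fixed point with $\beta>\Psi(\overline u)$ that is the smaller solution must lie in $(0,\overline u)$. In the degenerate case $\beta=\Psi(\overline u)$ one has $u_-=\overline u$ and $F(1,u_-)=0$, which corresponds to a different (fold-type) non-hyperbolic scenario already covered by the ``$q(u_-)=1$'' boundary only marginally — I would note this edge case is consistent since there $F(1)=0$ means $\lambda=1$ is an eigenvalue, distinct from the stated lemma's hypothesis, so the lemma implicitly concerns $\beta$ strictly above $\Psi(\overline u)$.

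Next I would check $F(-1,u_-)=1+p(u_-)+q(u_-)>0$. Here $p(u)=(1-u)\frac{1+2cu}{1+cu}+1\geq0$ on $(0,1)$ — in fact $p(u)>1$ there — and $q(u)=p(u)-1+u(1-u)\left(\frac{\beta}{(1+cu)^2}-\theta\right)$. Writing $F(-1,u_-)=2p(u_-)-1+u_-(1-u_-)\left(\frac{\beta}{(1+cu_-)^2}-\theta\right)=2p(u_-)-1+F(1,u_-)$; since $p(u_-)>1$ and $F(1,u_-)\geq0$, we get $F(-1,u_-)>1>0$. With both boundary values positive, the Jury conditions collapse: $E_-$ is attractive iff $q(u_-)<1$, and when $q(u_-)>1$ at least one of the three conditions fails while $F(\pm1)>0$ still hold, forcing $|\lambda_1\lambda_2|=q>1$ with the product of moduli exceeding one — combined with $F(\pm1)>0$ this yields a source. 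For the non-hyperbolic case, $q(u_-)=1$ together with $p(u_-)<2$ puts the (then necessarily complex-conjugate) eigenvalues on the unit circle.

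The main obstacle I anticipate is the bookkeeping needed to confirm the \emph{location} $u_-<\overline u$ rigorously in every sub-case of Proposition \ref{prop2}(i)--(ii), since there $u_-$ is defined by an explicit quadratic formula in $\beta,r,\theta,c$ rather than directly through $\Psi$; I would either argue via the monotonicity of $\Psi$ as above, or verify directly from $u_\mp=\frac{\beta-rc-\theta\mp\sqrt{(\beta-rc-\theta)^2-4cr\theta}}{2c\theta}$ that $c\theta u_-^2<r$ by substituting and simplifying $c\theta u_-^2=\frac{r+\theta u_-\cdot(\text{something})}{\dots}$ — concretely, $u_-u_+=\frac{r}{c\theta}=\overline u^2$, so $u_-<\overline u<u_+$ whenever the two roots are distinct, which is immediate. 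A secondary minor point is simply citing the quadratic stability (Jury) criterion correctly; everything else is routine algebra driven by the fixed-point relation (\ref{beta}).
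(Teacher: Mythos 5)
Your proof is correct and follows essentially the route the paper itself takes for the analogous $h=2$ statement (Proposition \ref{type}): show $F(1,u_-)>0$ and $F(-1,u_-)>0$ at the fixed point and then let the Jury/Schur--Cohn criterion (Lemma \ref{keylem}) resolve the three cases by the sign of $q(u_-)-1$ and, when $q(u_-)=1$, by $p(u_-)<2$; the paper does not reprove the $h=1$ lemma, quoting it from \cite{SH}. Your verification of $F(1,u_-)>0$ via the root relation $u_-u_+=r/(c\theta)=\overline{u}^{\,2}$, hence $u_-<\overline{u}$, is a clean substitute for the paper's comparison of $\Psi$ with an auxiliary function in the $h=2$ case. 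Two small points: the identity should read $F(-1,u_-)=2p(u_-)+F(1,u_-)$ rather than $2p(u_-)-1+F(1,u_-)$ (your conclusion $F(-1,u_-)>0$ is unaffected, indeed strengthened, since $p(u_-)>1$); and your caveat about the degenerate case $\beta=\Psi(\overline{u})$, where $u_-=\overline{u}$, $F(1,u_-)=0$ and $\lambda=1$ is an eigenvalue, is apt --- the lemma must be read as excluding that boundary value, even though Proposition \ref{prop2}(i) nominally allows $\beta=(\sqrt{\theta}+\sqrt{rc})^2$.
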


\begin{pro} (\cite{SH}) For the fixed point $E_{+}=(u_{+},v_{+})$ of the operator (\ref{h1}), the followings hold true
$$E_{+}=\left\{\begin{array}{lll}
{\rm saddle}, \ \ {\rm if} \ \  F(-1, u_{+})>0\\[2mm]
{\rm repelling}, \ \ \ \ {\rm if} \ \  F(-1, u_{+})<0\\[2mm]
{\rm nonhyperbolic}, \ \ \ \  {\rm if}  \ \ F(-1, u_{+})=0,
\end{array}\right.$$
\end{pro}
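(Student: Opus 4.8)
The plan is to exploit the elementary fact that for the quadratic characteristic polynomial $F(\lambda,u)=\lambda^{2}-p(u)\lambda+q(u)$, which as a function of $\lambda$ is an upward parabola, the location of its roots relative to the points $\lambda=1$ and $\lambda=-1$ is completely determined by the signs of $F(1,u)$ and $F(-1,u)$. The heart of the argument is to show that at the larger positive fixed point one always has $F(1,u_{+})<0$; the trichotomy in the statement then drops out by inspection of $F(-1,u_{+})$.

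First I would compute $F(1,u)=1-p(u)+q(u)$. From \eqref{bif1} one gets $q(u)-p(u)=u(1-u)\bigl(\tfrac{\beta}{(1+cu)^{2}}-\theta\bigr)-1$, hence $F(1,u)=u(1-u)\bigl(\tfrac{\beta}{(1+cu)^{2}}-\theta\bigr)$. Now I use that any positive fixed point satisfies $\beta=\Psi(u)=\frac{(r+\theta u)(1+cu)}{u}$, so $\frac{\beta}{(1+cu)^{2}}=\frac{r+\theta u}{u(1+cu)}$; substituting and simplifying yields the fully factored form
\begin{equation*}
F(1,u)=\frac{(1-u)\,(r-c\theta u^{2})}{1+cu}=\frac{c\theta\,(1-u)\,(\overline{u}-u)\,(\overline{u}+u)}{1+cu},\qquad \overline{u}=\sqrt{\tfrac{r}{c\theta}}.
\end{equation*}
Next I would locate $u_{+}$. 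Since $E_{+}$ exists only in case (ii) of Proposition \ref{prop2}, we have $r<c\theta$ (so $\overline{u}<1$) and $\Psi(\overline{u})<\beta<(c+1)(r+\theta)=\Psi(1)$. Because $\Psi$ is strictly decreasing on $(0,\overline{u})$ and strictly increasing on $(\overline{u},\infty)$ with minimum at $\overline{u}$, the two solutions of $\Psi(u)=\beta$ obey $u_{-}<\overline{u}<u_{+}$, and monotonicity of $\Psi$ on $(\overline{u},\infty)$ together with $\Psi(u_{+})=\beta<\Psi(1)$ forces $u_{+}<1$. Thus $\overline{u}<u_{+}<1$, and in the factored expression above the signs are fixed: $(1-u_{+})>0$, $(\overline{u}-u_{+})<0$, $(\overline{u}+u_{+})>0$, $(1+cu_{+})>0$, whence $F(1,u_{+})<0$.

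Finally I would conclude. Since $F(1,u_{+})<0$, the upward parabola $\lambda\mapsto F(\lambda,u_{+})$ assumes a negative value, so it has two distinct real roots $\lambda_{1}<1<\lambda_{2}$; in particular $|\lambda_{2}|>1$, so $E_{+}$ can never be attractive. It remains to compare $\lambda_{1}$ with $-1$ using $F(-1,u_{+})=(-1-\lambda_{1})(-1-\lambda_{2})$. If $F(-1,u_{+})>0$ the two factors have equal sign; as $\lambda_{2}>1>-1$ rules out both being positive, we get $-1<\lambda_{1}$, so $-1<\lambda_{1}<1<\lambda_{2}$ and $E_{+}$ is a saddle. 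If $F(-1,u_{+})<0$ the factors have opposite signs, forcing $\lambda_{1}<-1<\lambda_{2}$, so $|\lambda_{1}|>1$, $|\lambda_{2}|>1$ and $E_{+}$ is repelling. If $F(-1,u_{+})=0$ then $-1$ is a root, necessarily $\lambda_{1}=-1$ (as $\lambda_{2}>1$), so $E_{+}$ is non-hyperbolic. The only non-routine step is the algebraic reduction of $F(1,u)$ to the factored form via the relation $\beta=\Psi(u)$ and pinning down the inequality chain $\overline{u}<u_{+}<1$ from Lemma \ref{lemma2}; once $F(1,u_{+})<0$ is established, the eigenvalue discussion is entirely elementary.
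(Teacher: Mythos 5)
Your argument is correct: the identity $F(1,u)=u(1-u)\bigl(\tfrac{\beta}{(1+cu)^{2}}-\theta\bigr)=\frac{(1-u)(r-c\theta u^{2})}{1+cu}$ at a fixed point, combined with $\overline{u}<u_{+}<1$ (which indeed follows from Proposition \ref{prop2}(ii) and the monotonicity of $\Psi$), yields $F(1,u_{+})<0$, and the saddle/repelling/nonhyperbolic trichotomy then follows correctly from the sign of $F(-1,u_{+})$. This is essentially the same route as the paper's (the statement itself is only cited from \cite{SH}, but the paper proves the $h=2$ analogue, Proposition \ref{type}, in exactly this way); the only cosmetic differences are that you establish $F(1,u_{+})<0$ by substituting $\beta=\Psi(u_{+})$ and factoring, rather than by comparing $\Psi$ with an auxiliary function, and you verify the quadratic root-location facts elementarily instead of invoking Lemma \ref{keylem}(iii).
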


In \cite{SH} it was shown that the fixed point $E_{-}=(u_{-},v_{-})$ can pass through the Neimark-Sacker bifurcation.

\subsection{Dynamics of (\ref{h1})}
\begin{pro} The following sets
\begin{equation}
M_{1}=\{(u,v)\in \mathbb{R}_+^2: 0\leq u\leq2, v=0\}, \ \ M_{2}=\{(u,v)\in \mathbb{R}_+^2: u=0, v\geq0\},
\end{equation}
are invariant w.r.t. operator (\ref{h1}). In this operator, the condition $r\leq 1$ must be satisfied for invariance of $M_2.$
\end{pro}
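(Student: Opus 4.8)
The plan is to verify directly that each set is mapped into itself by $V_1$, checking the two defining conditions (the coordinate equation and the inequality) separately.

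For $M_2 = \{(u,v) : u = 0, v \ge 0\}$: First I would substitute $u = 0$ into the two components of (\ref{h1}). The first component gives $u^{(1)} = 0\cdot(2-0) - 0 = 0$, so the $u$-coordinate stays at $0$ automatically. The second component gives $v^{(1)} = 0 + (1-r)v - 0 = (1-r)v$. For $M_2$ to be invariant we need $v^{(1)} \ge 0$ whenever $v \ge 0$, which forces $(1-r) \ge 0$, i.e. $r \le 1$. This explains the stated restriction: the $u$-coordinate condition is automatic, but preserving nonnegativity of $v$ requires $r \le 1$. (Conversely, if $r > 1$ then any point with $v > 0$ leaves $\mathbb{R}_+^2$, so $M_2$ is genuinely not invariant; worth a one-line remark.)

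For $M_1 = \{(u,v) : 0 \le u \le 2, v = 0\}$: Substituting $v = 0$ into (\ref{h1}) gives $u^{(1)} = u(2-u)$ and $v^{(1)} = 0$, so the $v$-coordinate stays at $0$ automatically. The only thing to check is that $u \in [0,2]$ implies $u(2-u) \in [0,2]$. This is the logistic map $g(u) = u(2-u) = 1-(1-u)^2$ on $[0,2]$: it is nonnegative on $[0,2]$ (product of two nonnegative factors), and its maximum on $[0,2]$ is attained at $u=1$ with value $1 \le 2$, so $0 \le g(u) \le 1 \le 2$. Hence $M_1$ is invariant with no extra parameter condition. I would also note $M_1 \cap M_2 = \{(0,0)\} = E_0$, consistent with both being invariant.

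There is no real obstacle here — the argument is a routine substitution-and-estimate. The only point requiring a moment's care is the direction of the claim about $M_2$: the proposition asserts $r \le 1$ is necessary for invariance, so besides showing $r \le 1$ suffices I should observe that $r > 1$ breaks it. The computation $g([0,2]) \subseteq [0,1]$ for the logistic-type map is the only ``estimate'' and is immediate from completing the square.
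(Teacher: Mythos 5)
Your proof is correct and follows essentially the same route as the paper: substitute $v=0$ (resp. $u=0$) into (\ref{h1}), observe that the logistic-type map $u(2-u)$ sends $[0,2]$ into $[0,1]$, and that $v^{(1)}=(1-r)v\geq 0$ requires $r\leq 1$. Your added remark that $r>1$ actually destroys invariance of $M_2$ is a small clarification beyond the paper's proof but not a different method.
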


\begin{proof} It is clear that $v^{(1)}=0$ when $v=0.$ If $v=0$ then $u^{(1)}=u(2-u)\geq0$ and the maximum value in [0,2] of the quadratic function $f(u)=u(2-u)$ is 1. Hence, the set $M_1$ is an invariant. If $u=0$ then $v^{(1)}=(1-r)v\geq0$ and $M_2$ also invariant set. The proof is completed.
\end{proof}

\subsubsection{Dynamics on invariant sets $M_1$ and $M_2$}
\hfill

\emph{Case} $M_1.$ In this case the restriction is
$$u^{(1)}=u(2-u)=f(u),  \ \ u\in[0,2].$$
Note that $f(u)$ has two fixed points $u=0$ and $u=1.$ Since $f'(u)=2-2u$ the fixed point 0 is repelling and 1 is an attracting fixed point. Moreover, the equation
$$\frac{f^2(u)-u}{f(u)-u}=0 \Rightarrow \frac{u^4-4u^3+6u^2-3u}{u^2-u}=0 \Rightarrow u^2-3u+3=0 $$
has no real solution. Consequently, the function $f$ does not have two periodic points and, by Sarkovskii's theorem, there is no any periodic point (except fixed point).

Let's study the dynamics of $f(u):$ If $u\in(0,1)$ then $0<u<f(u)<1.$ So
\[
0<f(u)<f^2(u)<1 \Rightarrow  \cdots \Rightarrow 0<f^n(u)<f^{n+1}(u)<1.
\]
Thus, the sequence $f^n(u)$ is monotone increasing and bounded from above, which has the limit. Since 1 is a unique fixed point in $(0,1],$ we have that
\[
\lim_{n\to\infty}f^{n}(u)=1.
\]
If $u$ lies in the interval (1,2) then $f(u)$ lies in (0,1), so that the previous argument implies $f^n(u)=f^{n-1}(f(u))\rightarrow1$  as $n\rightarrow\infty.$
Hence, 1 is globally attractive fixed point. (similar to quadratic family, \cite{De}, page 32 ).

\emph{Case} $M_2.$ In this case the restriction is $$v^{(1)}=(1-r)v,  \ \ 0<r\leq1$$
and $v^{(n)}=(1-r)^nv\rightarrow0$ as $n\rightarrow\infty.$ Thus, the dynamics on $M_1$ and $M_2$ is clear.

From $v^{(1)}\geq0$ we obtain  the inequality

\begin{equation}\label{fpeq}
c\theta u^2-(\beta+c-\theta-rc)u+r-1\leq0.
\end{equation}
for which we have the solution that $u\in[\widehat{u}_{-},\widehat{u}_{+}],$ where

\[
\widehat{u}_{\mp}=\frac{\beta+c-\theta-rc\mp\sqrt{(\beta+c-\theta-rc)^2+4c\theta(1-r)}}{2c\theta}.
\]

Let's classify parameters set as following:

\textbf{Class A:}  $\widehat{u}_{-}<0,\ \ \widehat{u}_{+}>1:$
\medskip

(a1)  $0<\theta\leq1, \ \ 0<r\leq1-\theta, \ \ \beta>0, \ \  c>0;$

(a2)  $0<\theta\leq1, \ \ 1-\theta<r<1, \ \ \beta>r+\theta-1, \ \  0<c<\frac{\beta}{r+\theta-1}-1;$

(a3)  $\theta>1, \ \ 0<r<1, \ \ \beta>r+\theta-1, \ \  0<c<\frac{\beta}{r+\theta-1}-1.$

\medskip
\textbf{Class B:} $0<\widehat{u}_{-}<1,\ \ \widehat{u}_{+}>1:$
\medskip

(b1)  $\theta>0, \ \ r>1, \ \ \beta>r+\theta-1, \ \  0<c<\frac{\beta}{r+\theta-1}-1;$

\medskip
\textbf{Class C:}  $\widehat{u}_{-}\leq0,\ \ \widehat{u}_{+}\geq2:$
\medskip

(c1)  $0<\theta\leq\frac{1}{2}, \ \ 0<r\leq1-2\theta, \ \ \beta>0, \ \  c>0;$

(c2)  $0<\theta\leq\frac{1}{2}, \ \ 1-2\theta < r\leq1, \ \ \beta>\frac{r+2\theta-1}{2}, \ \ 0<c\leq\frac{\beta}{r+2\theta-1}-\frac{1}{2} ;$

(c3)  $\theta>\frac{1}{2}, \ \ 0< r\leq1, \ \ \beta>\frac{r+2\theta-1}{2}, \ \ 0<c\leq\frac{\beta}{r+2\theta-1}-\frac{1}{2} ;$

\medskip
\textbf{Class D:}  $\widehat{u}_{-}<0,\ \ 1<\widehat{u}_{+}<2:$
\medskip

(d1)  $0<\theta\leq\frac{1}{2}, \ \ 1-2\theta < r\leq1-\theta, \ \ 0<\beta\leq\frac{r+2\theta-1}{2}, \ \  c>0;$

(d2)  $0<\theta\leq\frac{1}{2}, \ \ 1-2\theta < r\leq1-\theta, \ \ \beta>\frac{r+2\theta-1}{2}, \ \ c>\frac{\beta}{r+2\theta-1}-\frac{1}{2} ;$

(d3)  $\frac{1}{2}<\theta\leq1, \ \ 0 < r\leq1-\theta, \ \ 0<\beta\leq\frac{r+2\theta-1}{2}, \ \  c>0;$

(d4)  $\frac{1}{2}<\theta\leq1, \ \ 0 < r\leq1-\theta, \ \ \beta>\frac{r+2\theta-1}{2}, \ \ c>\frac{\beta}{r+2\theta-1}-\frac{1}{2} ;$

(d5)  $0<\theta\leq1,  \ \ 1-\theta<r<1, \ \ r+\theta-1<\beta\leq\frac{r+2\theta-1}{2}, \ \  0<c<\frac{\beta}{r+\theta-1}-1;$

(d6)  $0<\theta\leq1,   \ \ 1-\theta<r<1, \ \ \beta>\frac{r+2\theta-1}{2}, \ \  \frac{\beta}{r+2\theta-1}-\frac{1}{2}<c<\frac{\beta}{r+\theta-1}-1;$

(d7)  $\theta>1,  \ \ 0<r<1, \ \ r+\theta-1<\beta\leq\frac{r+2\theta-1}{2}, \ \  0<c<\frac{\beta}{r+\theta-1}-1;$

(d8)  $\theta>1,   \ \ 0<r<1, \ \ \beta>\frac{r+2\theta-1}{2}, \ \  \frac{\beta}{r+2\theta-1}-\frac{1}{2}<c<\frac{\beta}{r+\theta-1}-1.$

\medskip

Note that for other parameter values we have $v^{(1)}<0.$

\begin{pro}\label{invm} Let $r\geq c\theta,$  $\beta\leq(c+1)(r+\theta)$ or $r\leq c\theta,$  $\beta\leq(\sqrt{\theta}+\sqrt{cr})^2$. If one of the (a1)--(a3) is satisfied then the following sets are invariant w.r.t. operator (\ref{h1}):

(i)
\[
M_3=\left\{(u,v)\in \mathbb{R}^2: 0\leq u\leq1, \, 0\leq v\leq2\right\},
\]

(ii) if $c\leq1/2$ then
\[
M_4=\left\{(u,v)\in \mathbb{R}^2: 0\leq u\leq1, \, 0\leq v\leq(2-u)(1+cu)\right\}.
\]
\end{pro}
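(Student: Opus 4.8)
The plan is to take an arbitrary point $(u,v)$ in each of the two sets and check that its image $(u^{(1)},v^{(1)})$ under $V_1$ again obeys the defining inequalities. Two preliminary facts about the second coordinate carry most of the argument. Writing $v^{(1)}=v\,B(u)$ with $B(u):=\frac{\beta u}{1+cu}+1-r-\theta u$, I would first note that $B(u)\ge0$ for $u\in[0,1]$ is, after multiplying through by $1+cu>0$, exactly inequality (\ref{fpeq}); since each of (a1)--(a3) puts the parameters in Class~A, we have $\widehat u_-<0<1<\widehat u_+$, so $[0,1]\subset[\widehat u_-,\widehat u_+]$ and $B\ge0$ there. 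Second, $B(u)\le1$ for $u\in(0,1]$ is equivalent to $\beta\le\Psi(u)$, and because $\overline u=\sqrt{r/(c\theta)}$ the minimum of $\Psi$ on $(0,1]$ equals $\Psi(1)=(c+1)(r+\theta)$ when $r\ge c\theta$ and equals $\Psi(\overline u)=(\sqrt\theta+\sqrt{rc})^2$ when $r\le c\theta$; thus the standing parameter hypothesis of the proposition is precisely what forces $B(u)\le1$ on $(0,1]$ (at $u=0$ it reads $1-r\le1$, true since Class~A forces $r<1$). Together these give $0\le v^{(1)}=v\,B(u)\le v$ for every $(u,v)$ with $u\in[0,1]$, $v\ge0$.

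For part (i) I would then argue that if $(u,v)\in M_3$ then, from $u^{(1)}=u\big(2-u-\tfrac{v}{1+cu}\big)$, one has $u^{(1)}\le u(2-u)\le1$; the nonnegativity $u^{(1)}\ge0$ amounts to $v\le(2-u)(1+cu)$, which I would verify on the parameter range allowed by the hypotheses, and $0\le v^{(1)}\le v\le2$ is immediate from the previous paragraph, so $V_1(M_3)\subset M_3$.

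For part (ii), assume $c\le1/2$ and $(u,v)\in M_4$, i.e. $w:=\tfrac{v}{1+cu}\in[0,2-u]$. Then $u^{(1)}=u(2-u-w)\in[0,u(2-u)]\subset[0,1]$ and $0\le v^{(1)}\le v$ as above, so the one nontrivial point is the containment inequality $v^{(1)}\le(2-u^{(1)})(1+cu^{(1)})=:g(u^{(1)})$. Since $v^{(1)}\le v$, it suffices to prove $v\le g(u^{(1)})$. Fixing $u\in(0,1]$ and using $s:=u^{(1)}=u(2-u)-uw$ as the running variable (so $s$ ranges over $[0,u(2-u)]$ and $v=g(u)-\tfrac{1+cu}{u}s$), this reduces to $\psi(s):=g(s)-g(u)+\tfrac{1+cu}{u}s\ge0$; but $\psi$ is quadratic in $s$ with leading coefficient $-c<0$, hence concave, so on $[0,u(2-u)]$ its minimum is at an endpoint, and $\psi(0)=2-g(u)=u(1-2c+cu)\ge0$ exactly because $c\le1/2$, while $\psi(u(2-u))=g(u(2-u))\ge0$ because $u(2-u)\in[0,1]$. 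The case $u=0$ is trivial ($u^{(1)}=0$, $v^{(1)}=(1-r)v\le v\le2=g(0)$). Hence $v^{(1)}\le g(u^{(1)})$ and $M_4$ is invariant.

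The step I expect to be the main obstacle is this last inequality for $M_4$: unlike every other bound it is not a pointwise monotonicity estimate — $u^{(1)}$ is a nonmonotone function of $(u,v)$ and $g$ is nonmonotone once $c>1/2$ — so the right move is to freeze $u$, pass to the single variable $s=u^{(1)}$, and exploit concavity of $\psi$; the hypothesis $c\le1/2$ enters precisely through the sign of $\psi(0)=u(1-2c+cu)$. All the remaining ingredients — inequality (\ref{fpeq}), the elementary bound $u(2-u)\le1$ on $[0,1]$, and the location of $\min\Psi$ — are already available from the material preceding the proposition.
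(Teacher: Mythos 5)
Your two preliminary facts about $v^{(1)}$ (nonnegativity of $B(u)$ on $[0,1]$ via inequality (\ref{fpeq}) and Class A, and $B(u)\le 1$ via $\beta\le\min_{(0,1]}\Psi$, which is $\Psi(1)=(c+1)(r+\theta)$ when $r\ge c\theta$ and $\Psi(\overline u)=(\sqrt\theta+\sqrt{rc})^2$ when $r\le c\theta$) are exactly the ones the paper uses, and your part (ii) is correct but argued differently: the paper proves $v^{(1)}\le(2-u^{(1)})(1+cu^{(1)})$ by noting that $g(x)=(2-x)(1+cx)$ is decreasing on $[0,1]$ when $c\le 1/2$ and splitting into the cases $u^{(1)}\ge u$ and $u^{(1)}<u$, whereas you freeze $u$, take $s=u^{(1)}$ as the variable and use concavity of $\psi(s)=g(s)-g(u)+\frac{1+cu}{u}s$, with $c\le1/2$ entering only through $\psi(0)=u(1-2c+cu)\ge0$. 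Your endpoint computation is right, and your route has the advantage of treating both cases uniformly (the paper's sub-case $v\le(1-u)(1+cu)$, where $u^{(1)}\ge u$ and the monotonicity of $g$ points the wrong way, is handled there only in passing).

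Part (i), however, contains a genuine gap, and it sits precisely at the step you postpone with ``which I would verify on the parameter range allowed by the hypotheses.'' The requirement $u^{(1)}\ge0$ for $(u,v)\in M_3$ is $v\le(2-u)(1+cu)$, and with $v=2$ this forces $c(2-u)\ge1$, hence $c\ge1$ as $u\to1$; but (a1)--(a3) only bound $c$ from above (in (a1) any $c>0$ is allowed), so the hypotheses cannot deliver this. Concretely, take $\theta=r=0.5$, $c=0.1$, $\beta=1$: then (a1) holds, $r\ge c\theta$ and $\beta\le(c+1)(r+\theta)=1.1$, yet the point $(u,v)=(1,2)\in M_3$ gives $u^{(1)}=1-2/1.1<0$, so the image leaves $M_3$. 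In other words, the deferred verification is not merely missing, it fails; the paper's own proof hides the same difficulty by asserting $2\le(2-u)(1+cu)$ for all $u\in[0,1]$, which is likewise true only when $c\ge1$. So your instinct to flag this inequality was correct, but writing ``I would verify'' does not close it, and it cannot be closed without strengthening the hypotheses (e.g.\ $c\ge1$) or shrinking the set (e.g.\ replacing the bound $v\le2$ by $v\le(2-u)(1+cu)$, as in $M_4$, where the needed inequality is part of the definition and your argument goes through).
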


\begin{proof} (i). First, we will show that for any  $(u,v)\in M_3$,  it follows that $(u^{(1)},v^{(1)})\in M_3.$ Let $(u,v)\in M_3.$ From $v\leq2\leq(2-u)(1+cu)$ for $u\in[0,1],$ we have that $u^{(1)}\geq0.$ Moreover, the maximum value of the function $x(2-x)$ is 1, so
$u^{(1)}=u(2-u)-\frac{uv}{1+cu}\leq u(2-u)\leq1.$ Thus, we have $0\leq u^{(1)}\leq1.$

If one of the conditions $(a1)-(a3)$ is satisfied then $\widehat{u}_{-}<0$ and $\widehat{u}_{+}>1.$ Consequently, $v^{(1)}\geq0$  for all $u\in[0,1].$  Moreover, from $r\geq c\theta$ and $\beta\leq(c+1)(r+\theta)$ (or $r\leq c\theta,$  $\beta\leq(\sqrt{\theta}+\sqrt{cr})^2$) we obtain that $\beta\leq\Psi(u)$ for all $u\in[0,1],$ i.e., $v^{(1)}\leq v\leq2$ for all $u\in[0,1].$ Thus $(u^{(1)},v^{(1)})\in M_3.$

(ii) Let $c\leq1/2$ and $(u,v)\in M_4.$ Obtaining the conditions $0\leq u^{(1)}\leq1$ and $v^{(1)}\geq0$ are similar to the previous case. From $c\leq1/2$ we have that the function $(2-x)(1+cx)$ is decreasing in the interval $[0,1].$ In addition, if $v\leq(1-u)(1+cu)$ then $v\leq(2-u)(1+cu)$ is obvious, if $v>(1-u)(1+cu)$ then $u^{(1)}<u$ and since the function $(2-x)(1+cx)$ is decreasing we have that $(2-u^{(1)})(1+cu^{(1)})\geq(2-u)(1+cu).$ Finally, we get
\[
v^{(1)}\leq v \leq (2-u)(1+cu)\leq(2-u^{(1)})(1+cu^{(1)}),
\]
which supplies that $(u^{(1)},v^{(1)})\in M_4.$ The proof is complete.
\end{proof}

\begin{rk} If $c>1/2$ then the set $M_4$ is not invariant w.r.t. operator (\ref{h1}). For example, the point $(u,v)=(0.15; 6)\in M_4$ while $(u^{(1)},v^{(1)})\approx(0.05; 4.66)\notin M_4.$
\end{rk}

\begin{pro}\label{prop6} Let $r\geq c\theta,$  $\beta\leq(c+1)(r+\theta)$ or $r\leq c\theta,$  $\beta\leq(\sqrt{\theta}+\sqrt{cr})^2.$ If

(i) one of the conditions $(a1)-(a3)$ is satisfied and $(u^{(0)},v^{(0)})\in M_3$;

(ii) one of the conditions $(a1)-(a3)$ is satisfied, $c\leq1/2$ and $(u^{(0)},v^{(0)})\in M_4$;

(iii) the condition $(b1)$ is satisfied and $\widehat{u}_{-}\leq u^{(0)}\leq1,$ $0\leq v^{(0)}\leq(1-u^{(0)})(1+cu^{(0)})$;\\
then for the initial point  $(u^{(0)},v^{(0)}),$ the trajectory has the following limit
\[
\lim_{n\to\infty}V_{1}^{n}(u^{(0)},v^{(0)})=E_1=\left(1,0\right).
\]
\end{pro}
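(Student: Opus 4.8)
The plan is to combine the invariance results of Proposition~\ref{invm} (and its analogue for Class~B) with a monotonicity argument on the $v$-component to force the trajectory into the region where the restricted $u$-dynamics $u^{(1)}=u(2-u)$ governs the asymptotics, and then invoke the global attractivity of $u=1$ established for the map $f(u)=u(2-u)$ in the analysis of the invariant set $M_1$. First I would observe that under the standing hypothesis $r\geq c\theta,\ \beta\leq(c+1)(r+\theta)$ (or $r\leq c\theta,\ \beta\leq(\sqrt{\theta}+\sqrt{cr})^2$) we have $\beta\leq\Psi(u)$ for all $u\in(0,1]$ because $\Psi$ attains its minimum on $(0,1]$ at $\overline{u}=\sqrt{r/(c\theta)}$ (when $\overline u<1$) or is decreasing on $(0,1]$ (when $\overline u\geq 1$); hence from \eqref{beta} and the second equation of \eqref{h1} one gets $\dfrac{\beta u}{1+cu}+(1-r)-\theta u\leq 1$, i.e. $v^{(1)}\leq v$ for every admissible $u$. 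Thus along the trajectory the sequence $v^{(n)}$ is nonincreasing; being bounded below by $0$ (this is exactly where invariance of $M_3$, $M_4$, or the Class-B set is used, to guarantee $u^{(n)}\in[0,1]$ and $v^{(n)}\geq0$ for all $n$), it converges to some limit $v^\ast\geq0$.

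Next I would identify $v^\ast$. Passing to a convergent subsequence $u^{(n_k)}\to u^\ast\in[0,1]$ and using $v^{(n_k+1)}-v^{(n_k)}\to 0$ together with the explicit form of $v^{(1)}-v$, one finds $v^\ast\bigl(\tfrac{\beta u^\ast}{1+cu^\ast}-r-\theta u^\ast\bigr)=0$. Since the bracket is $\Psi(u^\ast)\cdot\tfrac{(u^\ast)}{1+cu^\ast}-\ldots$, more precisely it equals $0$ only if $\beta=\Psi(u^\ast)$, which under our strict/weak inequality forces either $v^\ast=0$, or $u^\ast=\overline u$ with $\beta=\Psi(\overline u)$. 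The borderline case $\beta=\Psi(\overline u)$ with $u^\ast=\overline u$ would have to be excluded separately: there $E_-=E_+$ is the unique (non-hyperbolic, since $p(\overline u)<2$) positive fixed point, and a linearization/normal-form check at that point, or a direct estimate showing $u^{(1)}\neq \overline u$ generically while $v$ keeps strictly decreasing, rules out convergence to it from the interior of the invariant set. So in all cases $v^{(n)}\to 0$.

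Once $v^{(n)}\to 0$ is known, I would close the argument on the $u$-component. Write $u^{(1)}=f(u)-\varepsilon_n$ with $\varepsilon_n=\dfrac{u^{(n)}v^{(n)}}{1+cu^{(n)}}\to 0$ and $f(u)=u(2-u)$. On the invariant set $u^{(n)}\in[0,1]$, and one expects $u^{(n)}$ to stay eventually bounded away from $0$: indeed if $u^{(0)}>0$ then, since $v^{(n)}$ is small for large $n$ and $f$ pushes points in $(0,1)$ strictly upward, a standard comparison (choose $N$ with $\varepsilon_n$ small, then show $u^{(n)}$ is eventually increasing and bounded below by a positive constant) gives $\liminf u^{(n)}>0$. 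Then the asymptotic autonomy of $u^{(n+1)}=f(u^{(n)})-\varepsilon_n$ together with the fact that $1$ is the unique fixed point of $f$ in $(0,1]$ and is globally attracting there (shown in the $M_1$ analysis) yields $u^{(n)}\to 1$ by an $\varepsilon$--$\delta$ trapping argument: fix a small neighborhood of $1$ that $f$ maps into itself with contraction-like behavior on $[\delta,1]$, absorb the vanishing perturbation $\varepsilon_n$, and conclude. The case $u^{(0)}=0$ is already covered by the dynamics on $M_2$ (where $v^{(n)}\to0$ and $u^{(n)}\equiv0$ — but note $(0,0)$ is not $E_1$, so in fact the hypotheses in (i)--(iii) implicitly put the initial point with $u^{(0)}>0$, or one treats $u^{(0)}=0$ as the degenerate excluded face). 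Assembling these, $V_1^n(u^{(0)},v^{(0)})\to(1,0)=E_1$.

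The main obstacle I anticipate is the step showing $\liminf_n u^{(n)}>0$ and handling the perturbed, non-autonomous scalar recursion $u^{(n+1)}=f(u^{(n)})-\varepsilon_n$ uniformly: one must rule out the possibility that the subtracted term $\varepsilon_n$, although tending to $0$, repeatedly knocks $u^{(n)}$ back toward $0$ faster than $f$ can recover, and one must also exclude the non-hyperbolic borderline $\beta=\Psi(\overline u)$ where attractivity of $E_-$ is not automatic. Both are handled by quantitative estimates — a lower bound of the form $u^{(n+1)}\geq u^{(n)}(1 + (1-u^{(n)}) - Cv^{(n)})$ valid on the invariant set — rather than by soft arguments, but they are the delicate points of the proof.
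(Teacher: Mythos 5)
Your overall route parallels the paper's in its first half (monotone decrease of $v^{(n)}$ on the invariant sets, absence of positive fixed points), and in the second half you are in fact more quantitative than the paper: where the paper only uses the regional monotonicity of $u^{(n)}$ (increasing below the parabola $(1-u)(1+cu)$, decreasing above it) together with local attractivity of $E_1$ and concludes softly that the trajectory ``falls into $U$ after finitely many steps,'' you first extract $v^{(n)}\to 0$ by a subsequence argument and then treat $u^{(n+1)}=f(u^{(n)})-\varepsilon_n$ as a vanishing perturbation of the logistic map, with an explicit trapping estimate giving $\liminf u^{(n)}>0$ and $u^{(n)}\to 1$. That part of your plan is sound and, if the estimates are written out, actually closes steps the paper leaves implicit.

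The genuine gap is your treatment of the borderline case $\beta=\Psi(\overline{u})=(\sqrt{\theta}+\sqrt{cr})^2$ with $r<c\theta$, which \emph{is} allowed by the hypothesis ``$r\leq c\theta$, $\beta\leq(\sqrt{\theta}+\sqrt{cr})^2$.'' You claim a linearization or direct estimate ``rules out convergence'' to the degenerate fixed point from the interior, but this cannot be done: by Lemma \ref{lemma2}, at this parameter value there is a genuine positive fixed point $E_-=E_+=\bigl(\overline{u},(1-\overline{u})(1+c\overline{u})\bigr)$, and it lies inside the regions of (i)--(ii) (e.g.\ $\theta=0.5$, $r=0.4$, $c=1$, $\beta=(\sqrt{0.5}+\sqrt{0.4})^2$ satisfies (a1) and puts the fixed point at roughly $(0.894,0.2)\in M_3$); the constant trajectory starting there does not converge to $(1,0)$, and since one eigenvalue there equals $1$ (a fold point, $\overline F$-type computation gives $\beta=\theta(1+c\overline{u})^2$), one expects a one-sided set of nontrivial initial points attracted to it as well --- indeed the paper's own Proposition \ref{prop8} asserts convergence to $E_-$ for exactly this $\beta$. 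So the borderline case is a true exception, not something to be excluded by a normal-form check; the correct repair is to run your argument under the strict inequalities $\beta<(c+1)(r+\theta)$ resp.\ $\beta<(\sqrt{\theta}+\sqrt{cr})^2$ (where the bracket in $v^{(1)}=v\bigl(1+\tfrac{\beta u}{1+cu}-r-\theta u\bigr)$ is uniformly negative, making $v^{(n)}\to0$ immediate), noting that the paper's own proof quietly needs the same restriction when it asserts ``there is no positive fixed point.'' Your remaining caveats (the face $u^{(0)}=0$, points on the upper boundary of $M_4$ that map to $u^{(1)}=0$) are minor and correctly flagged.
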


\begin{proof} Let  $r\geq c\theta,$  $\beta\leq(c+1)(r+\theta)$ (or $r\leq c\theta,$  $\beta\leq(\sqrt{\theta}+\sqrt{cr})^2$). Then  by Lemma \ref{lemma1} and  Lemma \ref{lemma2} it follows that there is no positive fixed point. Moreover, $\beta<\Psi(u)$ for all $u\in[0,1]$ which supplies that the sequence  $v^{(n)}$ is decreasing, so it has limit. Since $(\sqrt{\theta}+\sqrt{cr})^2\leq(c+1)(r+\theta)$ we have that $\beta\leq(c+1)(r+\theta)$ is always true for the conditions of proposition. From this we get that $E_1=\left(1,0\right)$ is an attracting fixed point (according to Proposition \ref{prop1}). Then there exists a neighbourhood $U$ of $E_1$ such that for any initial point from $U,$ the trajectory converges to $E_1.$

Now we divide the invariant set $M_4$ into three parts $S_1, S_2$ and $S_3,$ (as in Fig. \ref{mset}), where $M_3=S_1\cup S_2,$  $M_4=S_1\cup S_2\cup S_3=M_3\cup S_3,$ and
\[
S_1=\left\{(u,v)\in \mathbb{R}^2: 0< u\leq1, \, 0\leq v\leq(1-u)(1+cu)\right\},
\]
\[
S_2=\left\{(u,v)\in \mathbb{R}^2: 0< u\leq1, \, (1-u)(1+cu)<v\leq2\right\},
 \]
 \[
S_3=\left\{(u,v)\in \mathbb{R}^2: 0< u\leq1, \, 2<v\leq(2-u)(1+cu)\right\}.
 \]
If $(u^{(0)},v^{(0)})\in S_1$ (resp. $(u^{(0)},v^{(0)})\in S_2\cup S_3$) then it is obvious that  $u^{(1)}\geq u^{(0)}$ (resp. $u^{(1)}\leq u^{(0)}$). Thus, the sequence $u^{(n)}$ is increasing in $S_1$ and decreasing in $S_2\cup S_3.$ Recall that from the conditions of proposition, the sequence $v^{(n)}$ is always decreasing, the sets $M_3$ and $M_4$ are invariant sets and there is no any positive fixed point. To sum up, we have that for any initial point $(u^{(0)},v^{(0)})\in M_3$ (or  $(u^{(0)},v^{(0)})\in M_4$ with condition $c\leq1/2$),  the trajectory falls into $U$ after some finite steps and then converges to the fixed point $(1,0).$ If the condition $(b1)$ is satisfied and $\widehat{u}_{-}\leq u^{(0)}\leq1,$ $0\leq v^{(0)}\leq(1-u^{(0)})(1+cu^{(0)})$ then $u^{(n)}\geq\widehat{u}_{-},$ $v^{(n)}\geq0$ for all $n\in\mathbb{ N}$ and the trajectory falls into $U$ after some finite steps. The proof is complete.
\end{proof}

\begin{figure}[h!]
  \centering
  \includegraphics[width=7cm]{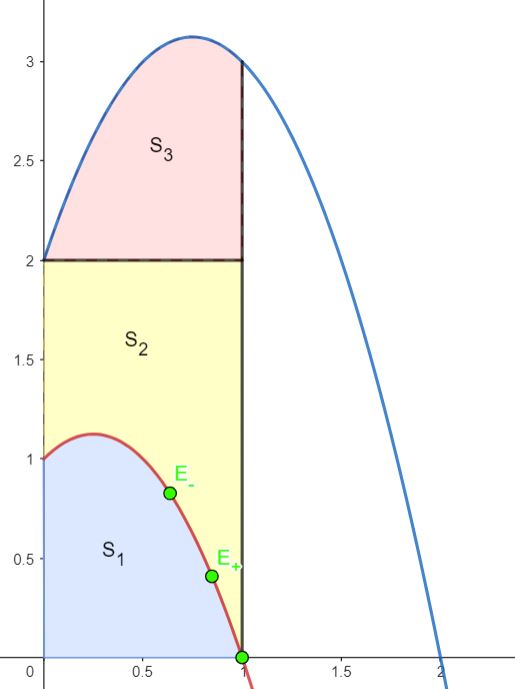}\\
  \caption{The invariant set $M_4$ is divided into three parts $S_1, S_2$ and $S_3.$}\label{mset}
\end{figure}

\begin{pro} \label {prop8}Let $r\geq 4c\theta,$  $\beta\leq\frac{(2c+1)(r+2\theta)}{2}$ or $r\leq 4c\theta,$  $\beta\leq(\sqrt{\theta}+\sqrt{cr})^2$ and $(u^{(0)}, v^{(0)})$ be an initial point. If

(i) one of the (c1)--(c3) is satisfied and $1<u^{(0)}<2,$ $0<v^{(0)}\leq(2-u^{(0)})(1+cu^{(0)});$

(ii) one of the (d1)--(d8) is satisfied and $1<u^{(0)}\leq\widehat{u}_{+},$ $0<v^{(0)}\leq(2-u^{(0)})(1+cu^{(0)});$\\
 then the trajectory converges to the fixed point (1,0) when $(u^{(1)}, v^{(1)})\in M_3$  or $(u^{(1)}, v^{(1)})\in M_4\setminus M_3$ with $c\leq1/2$.
\end{pro}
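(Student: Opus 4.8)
The plan is to show that a single application of $V_1$ carries the initial point into the region already handled by Proposition \ref{prop6}, and then to quote that proposition. The first task is analytic: I claim that under the stated parameter restrictions one has $\beta\le\Psi(u)$ for every $u\in(0,2]$. Recall from the discussion after \eqref{beta} that for $h=1$ the function $\Psi$ attains its global minimum at $\overline u=\sqrt{r/(c\theta)}$, being strictly decreasing on $(0,\overline u)$ and strictly increasing on $(\overline u,\infty)$. Hence if $r\ge 4c\theta$ then $\overline u\ge 2$, so $\Psi$ is decreasing on $(0,2]$ and $\min_{(0,2]}\Psi=\Psi(2)=\tfrac{(2c+1)(r+2\theta)}{2}\ge\beta$; while if $r\le 4c\theta$ then $\overline u\le 2$, so $\min_{(0,2]}\Psi=\Psi(\overline u)=(\sqrt\theta+\sqrt{cr})^2\ge\beta$ (the two formulas agree at $r=4c\theta$). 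In particular $\beta\le\Psi(1)=(c+1)(r+\theta)$, and if $r\le c\theta$ then $\overline u<2$ so $\beta\le\Psi(\overline u)=(\sqrt\theta+\sqrt{cr})^2$; thus the parameter hypothesis of Propositions \ref{invm} and \ref{prop6} is satisfied, and the sets $M_3$, $M_4$ are invariant. A second consequence: whenever $u^{(n)}\in(0,2]$ one has $v^{(n+1)}=v^{(n)}\bigl(\tfrac{\beta u^{(n)}}{1+cu^{(n)}}+1-r-\theta u^{(n)}\bigr)\le v^{(n)}$, since $\tfrac{\beta u}{1+cu}\le r+\theta u$ is precisely $\beta\le\Psi(u)$.

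Next I would analyse the first iterate. Let $(u^{(0)},v^{(0)})$ satisfy the hypotheses of (i) or (ii); in either case $u^{(0)}\in(1,2)$, respectively $u^{(0)}\in(1,\widehat u_+]$ with $\widehat u_+<2$ (class D), so $u^{(0)}\in(0,2]$, and $0<v^{(0)}\le(2-u^{(0)})(1+cu^{(0)})$. Writing $u^{(1)}=u^{(0)}\bigl((2-u^{(0)})-\tfrac{v^{(0)}}{1+cu^{(0)}}\bigr)$, the bound on $v^{(0)}$ makes the bracket nonnegative, hence $u^{(1)}\ge 0$; and $u^{(1)}\le u^{(0)}(2-u^{(0)})\le 1$, so $0\le u^{(1)}\le 1$. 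For $v^{(1)}\ge0$ recall that $v^{(1)}\ge0$ is equivalent to $u^{(0)}\in[\widehat u_-,\widehat u_+]$, which is \eqref{fpeq}: in case (i), class C gives $\widehat u_-\le0$ and $\widehat u_+\ge2>u^{(0)}$, and in case (ii), class D gives $\widehat u_-<0$ and $u^{(0)}\le\widehat u_+$, so in both cases $u^{(0)}\in[\widehat u_-,\widehat u_+]$ and $v^{(1)}\ge0$. Finally, $u^{(0)}\in(0,2]$ gives $0\le v^{(1)}\le v^{(0)}$ by the first paragraph. Thus $(u^{(1)},v^{(1)})$ lies in $\{(u,v):0\le u\le1,\ 0\le v\le v^{(0)}\}$.

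Now I can close the argument. By the hypothesis of the proposition, $(u^{(1)},v^{(1)})\in M_3$, or $(u^{(1)},v^{(1)})\in M_4\setminus M_3$ with $c\le1/2$ (so $(u^{(1)},v^{(1)})\in M_4$). The parameter hypothesis of Proposition \ref{prop6} holds by the first paragraph, and the classes (c1)--(c3), (d1)--(d8) each imply $\widehat u_-\le0$ and $\widehat u_+>1$, i.e. (up to the boundary case $r=1$, which is treated directly: there $\widehat u_-=0$ and $[0,1]\subset[\widehat u_-,\widehat u_+]$) one of the conditions (a1)--(a3). Therefore Proposition \ref{prop6}(i) applies to the point $(u^{(1)},v^{(1)})$ when it is in $M_3$, and Proposition \ref{prop6}(ii) applies when it is in $M_4$ and $c\le1/2$; in both cases $\lim_{n\to\infty}V_1^{\,n}(u^{(1)},v^{(1)})=(1,0)$. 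Since $V_1^{\,n+1}(u^{(0)},v^{(0)})=V_1^{\,n}(u^{(1)},v^{(1)})$, the trajectory from $(u^{(0)},v^{(0)})$ also converges to $(1,0)$.

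The step I expect to be the main obstacle is the bookkeeping that identifies the classes (c1)--(c3) and (d1)--(d8) as subfamilies of (a1)--(a3) (equivalently, checking $\widehat u_-\le0$, $\widehat u_+>1$ on each of those parameter boxes), together with the verification that ``$r\ge4c\theta$, $\beta\le\tfrac{(2c+1)(r+2\theta)}{2}$ or $r\le4c\theta$, $\beta\le(\sqrt\theta+\sqrt{cr})^2$'' genuinely implies the weaker hypothesis ``$r\ge c\theta$, $\beta\le(c+1)(r+\theta)$ or $r\le c\theta$, $\beta\le(\sqrt\theta+\sqrt{cr})^2$'' required by Propositions \ref{invm} and \ref{prop6}. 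Both reduce to routine but somewhat lengthy inequality chases of the same type used when the classes were defined; everything else is the short computation above.
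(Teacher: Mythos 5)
Your proof is correct and follows essentially the same route as the paper: verify $\beta\leq\Psi(u)$ on $(0,2]$ so that $v$ is non-increasing, check that the first iterate lands in $M_3$ (or $M_4$ with $c\leq1/2$) using the class conditions to get $v^{(1)}\geq0$ and $0\leq u^{(1)}\leq1$, confirm the stated parameter restrictions imply the weaker hypothesis of Proposition \ref{prop6}, and then invoke that proposition. In fact you supply details the paper leaves as ``easy calculation'' (the implication between the two parameter regimes via the minimum of $\Psi$, the identification of classes (c1)--(c3), (d1)--(d8) as giving $[0,1]\subset[\widehat{u}_{-},\widehat{u}_{+}]$, and the boundary case $r=1$), so no gap remains.
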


\begin{proof} Let $r\geq 4c\theta,$  $\beta\leq\frac{(2c+1)(r+2\theta)}{2}$ or $r\leq 4c\theta,$  $\beta\leq(\sqrt{\theta}+\sqrt{cr})^2$. These conditions are taken from $\overline{u}=\sqrt{\frac{r}{c\theta}}\geq2,  \ \ \beta<\Psi(2)$ or $\overline{u}\leq2,  \ \ \beta<\Psi(\overline{u})$ respectively. From this we get that  $\beta<\Psi(u)$ for all $u\in[0,2],$  and   $v^{(1)})\leq v.$ In addition, if parameters satisfy one of the conditions (c1)--(c3) then $v^{(1)})\geq0.$  The condition $0\leq u^{(1)}\leq1$ is obvious. By easy calculation it can be shown that from the conditions $r\geq 4c\theta,$  $\beta\leq\frac{(2c+1)(r+2\theta)}{2}$ (resp. $r\leq 4c\theta,$  $\beta\leq(\sqrt{\theta}+\sqrt{cr})^2$) we get $r\geq c\theta,$  $\beta\leq(c+1)(r+\theta)$ (resp. $r\leq c\theta,$  $\beta\leq(\sqrt{\theta}+\sqrt{cr})^2$). So, the convergence of trajectory to  (1,0) is straightforward from the previous proposition.   The proof is complete.
\end{proof}

\begin{pro} \label{prop8} Let $\beta>(c+1)(r+\theta)$ (or $r<c\theta$ and $\beta\in\{(c+1)(r+\theta),(\sqrt{\theta}+\sqrt{cr})^2\}$) and $q(u_{-})<1$ and $(u^{(0)},v^{(0)})$ be an initial point. If

(i) one of the conditions (a1)--(a3) is satisfied and
\[
(u^{(0)},v^{(0)})\in\left\{(u,v)\in \mathbb{R}^2: 0< u< 1, \, 0<v\leq(1-u)(1+cu)\right\};
\]

(ii) one of the conditions (a1)--(a3) is satisfied and
\[
(u^{(0)},v^{(0)})\in\left\{(u,v)\in \mathbb{R}^2: 0< u< u_{-}, \, (1-u)(1+cu)\leq v\leq2\right\};
\]

(iii) one of the conditions (a1)--(a3) is satisfied, $c\leq1/2$ and
\[
(u^{(0)},v^{(0)})\in\left\{(u,v)\in \mathbb{R}^2: 0< u< u_{-}, \, (1-u)(1+cu)\leq v\leq(2-u)(1+cu)\right\};
\]

(iv) the condition (b1) is satisfied and
\[
(u^{(0)},v^{(0)})\in\left\{(u,v)\in \mathbb{R}^2: \widehat{u}_{-}\leq u< u_{-}, \, 0<v\leq(1-u)(1+cu)\right\};
\]
then the trajectory has the following limit
\[
\lim_{n\to\infty}V_{1}^{n}(u^{(0)},v^{(0)})=E_{-}=(u_{-},v_{-}).
\]
\end{pro}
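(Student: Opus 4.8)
The plan is to reduce every case to a single scheme: produce a bounded forward‑invariant region containing the initial point, show that along the trajectory the second coordinate $v^{(n)}$ is (eventually) strictly decreasing, conclude that the trajectory must therefore enter after finitely many steps the neighbourhood $U$ of $E_-$ on which convergence holds, and then invoke attractiveness. The standing inputs are: by Lemmas~\ref{lemma1}--\ref{lemma2} and Proposition~\ref{prop2}, under the stated hypotheses $E_-$ is the \emph{unique} positive fixed point; since $q(u_-)<1$, the lemma classifying $E_-$ gives that $E_-$ is attractive, so such a $U$ exists; and by Proposition~\ref{prop1}, since $\beta>(c+1)(r+\theta)$ forces $\tfrac{\beta}{1+c}>r+\theta$, the point $E_1$ is a saddle, while $E_0$ is a saddle because $0<r<2$ in each of the classes (a1)--(a3),(b1). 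Their local stable manifolds are the invariant lines $\{v=0\}$ (for $E_1$) and $\{u=0\}$ (for $E_0$), so no trajectory that stays in $\{u>0,\ v>0\}$ can converge to $E_0$ or $E_1$.

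Next I would record the identities $u^{(1)}-u=\frac{u}{1+cu}\bigl((1-u)(1+cu)-v\bigr)$ and $v^{(1)}-v=\frac{uv}{1+cu}\bigl(\beta-\Psi(u)\bigr)$, together with $\Psi$ decreasing on $(0,\overline u)$ and $\Psi(u_-)=\beta$; under the present hypotheses $u_-$ is in fact the only root of $\beta=\Psi(u)$ in $(0,1)$, so for $u,v>0$ one has $u^{(1)}\gtrless u$ according as $v\lessgtr(1-u)(1+cu)$, and on $(0,1)$ one has $v^{(1)}\lessgtr v$ according as $u\lessgtr u_-$. Also $u^{(1)}\le u(2-u)\le 1$ always, $u^{(1)}\ge 0\iff v\le(2-u)(1+cu)$, and, from inequality \eqref{fpeq} behind the classes, under (a1)--(a3) one has $v^{(1)}>0$ whenever $0\le u\le 1$, $v>0$, while under (b1) the same holds provided $u\ge\widehat u_-$. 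Cases (ii) and (iii) are then straightforward: there $v\ge(1-u)(1+cu)$, hence $u^{(1)}\le u<u_-$, so the condition $u^{(n)}<u_-$ is propagated and $v^{(n)}$ is strictly decreasing; one checks invariance of the region (in case (iii) the hypothesis $c\le 1/2$ is exactly what makes $(2-u)(1+cu)$ decreasing on $[0,1]$, keeping the trajectory between the two curves so that $u^{(1)}\ge 0$, the class conditions giving $v^{(1)}\ge 0$). Hence $v^{(n)}\downarrow v^*\ge 0$; from $v^{(n+1)}/v^{(n)}=\frac{\beta u^{(n)}}{1+cu^{(n)}}+1-r-\theta u^{(n)}\to 1$, every limit point of $u^{(n)}$ in $(0,1)$ equals $u_-$, while $v^*=0$ is impossible (it would push the trajectory onto $\{v=0\}$, contradicting $v^{(n)}>0$). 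So $(u^{(n)},v^{(n)})\to(u_-,v^*)=E_-$; equivalently the trajectory enters $U$ in finitely many steps and converges to $E_-$.

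Cases (i) and (iv) are the delicate ones, since the initial point lies \emph{below} the curve $v=(1-u)(1+cu)$, where $u^{(1)}\ge u$, so $u^{(n)}$ may overshoot $u_-$ in one step and the orbit can wind once around $E_-$ rather than descend monotonically. Here the plan is (a) to show that the region below the curve, together with $M_3$ (and $M_4$ when $c\le 1/2$, cf. Propositions~\ref{invm} and \ref{prop6}), is bounded and forward invariant — using that a point below the curve with $u<u_-$ is mapped below the curve, and that above the curve $v^{(n)}$ strictly decreases, so $v$ cannot escape to infinity; and (b) to argue that $v^{(n)}$ is \emph{eventually} decreasing — it strictly decreases whenever $u^{(n)}<u_-$ and can increase only while $u^{(n)}\in(u_-,1)$, which inside the trapping region cannot persist forever — so that the argument of cases (ii)--(iii) applies verbatim: $v^{(n)}\to v^*>0$, every limit point of $u^{(n)}$ equals $u_-$, and the trajectory reaches $U$, whence convergence to $E_-$. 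For (iv) one also keeps $u^{(n)}\ge\widehat u_-$ (immediate, since $u^{(1)}\ge u\ge\widehat u_-$), so that $v^{(n)}\ge 0$ throughout.

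The main obstacle is the invariance bookkeeping: for each class among (a1)--(a3),(b1) and each prescribed region one must verify $0\le u^{(1)}\le 1$, $v^{(1)}\ge 0$, and that the trajectory cannot leave the region — in particular that $u^{(n)}$ never crosses $u_-$ in cases (ii)--(iii), and, in cases (i) and (iv), that the winding of the orbit around the curve $v=(1-u)(1+cu)$ stays confined to the bounded trapping region so that $v^{(n)}$ does not grow without bound. Once trapping and the eventual monotonicity of $v^{(n)}$ are in hand, identifying the limit with $E_-$ is automatic from uniqueness of the positive fixed point and the saddle character of $E_0,E_1$, and attractiveness of $E_-$ (guaranteed by $q(u_-)<1$) is exactly what lets one avoid analysing by hand the possibly spiralling final approach to $E_-$.
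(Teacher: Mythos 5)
Your overall scheme coincides with the paper's: uniqueness of the positive fixed point (Proposition \ref{prop2}), attractiveness of $E_{-}$ from $q(u_{-})<1$ giving a neighbourhood $U(E_{-})$, the sign identities for $u^{(1)}-u$ and $v^{(1)}-v$ in terms of the parabola $(1-u)(1+cu)$ and of $u_{-}$, the invariance bookkeeping under (a1)--(a3), (b1), and the conclusion that the orbit reaches $U(E_{-})$ in finitely many steps and then converges. The extra ingredients you bring in (the ratio $v^{(n+1)}/v^{(n)}\to 1$ to force limit points of $u^{(n)}$ onto $u_{-}$, and the saddle structure of $E_0$, $E_1$ with stable manifolds on the coordinate axes to exclude boundary limits) are not in the paper's proof, but they are compatible with it and, if carried out, would make the limit identification cleaner than the published argument.

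Two concrete points of divergence. First, in cases (ii)--(iii) your claim that ``$u^{(n)}<u_{-}$ is propagated and $v^{(n)}$ is strictly decreasing'' is not justified: the inequality $u^{(1)}\le u$ rests on $v\ge(1-u)(1+cu)$, and once the orbit drops below the parabola (which it must if it does not converge while staying above it) that hypothesis is lost, $u^{(n)}$ starts increasing and can overshoot $u_{-}$ in a single step, after which $v^{(n)}$ may increase again. The paper handles (ii)--(iii) precisely by this reduction: both coordinates decrease until the orbit enters the below-parabola region, ``and then the situation is the same'' as case (i); so (ii)--(iii) are not easier than (i), contrary to your easy/delicate split. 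Second, your resolution of the delicate cases --- that $v^{(n)}$ is \emph{eventually} monotone because $u^{(n)}\in(u_{-},1)$ ``cannot persist forever'' and the orbit ``can wind once'' around $E_{-}$ --- is asserted rather than proved; outside $U(E_{-})$ the orbit could in principle pass through all four monotonicity regions repeatedly, and eventual monotonicity of $v^{(n)}$ is exactly what would need an argument. To be fair, the paper's own proof is no more rigorous at this point (it simply concludes from the monotone-region picture that the trajectory falls into $U(E_{-})$ after finitely many steps), so your proposal matches the published argument in substance and in its level of completeness; the one genuine misstep relative to it is the propagation claim in cases (ii)--(iii).
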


\begin{proof} If $\beta>(c+1)(r+\theta)$ (or $r<c\theta$ and $\beta\in\{(c+1)(r+\theta),(\sqrt{\theta}+\sqrt{cr})^2\}$) then according to Proposition \ref{prop2}, there exists a unique positive fixed point $E_{-}=(u_{-},v_{-}).$ Moreover, from $q(u_{-})<1$ it follows that $E_{-}$ is an attracting point. Thus, there exists a neighbourhood $U(E_{-})$ of $E_{-}$, such that for any initial point taken from $U(E_{-}),$ the trajectory converges to $E_{-}.$   With the conditions to parameters in (i)-(iv) the trajectory remains in $M_4.$
 Note that $u_{-}$ is a solution of $c\theta u^2-(\beta-rc-\theta)u+r=0$ (i.e., $\beta=\Psi(u)$), and $c\theta u^2-(\beta-rc-\theta)u+r>0$ for all $u\in(0,u_{-})$ which is equivalent to $\beta<\Psi(u).$ Similarly, $\beta>\Psi(u)$ for all $u\in(u_{-},1).$ Thus, the sequence $v^{(n)}$ is decreasing in
 $\left\{(u,v)\in \mathbb{R}^2: 0< u< u_{-}, \, 0<v\leq(2-u)(1+cu)\right\}$ and increasing in  $\left\{(u,v)\in \mathbb{R}^2: u_{-}< u< 1, \, 0<v\leq(2-u)(1+cu)\right\}.$ Note also the sequence $u^{(n)}$ is increasing in the set $\left\{(u,v)\in \mathbb{R}^2: 0< u< 1, \, 0<v\leq(1-u)(1+cu)\right\}$ and decreasing in $\left\{(u,v)\in \mathbb{R}^2: 0< u< 1, \, (1-u)(1+cu)<v\leq(2-u)(1+cu)\right\}.$

(i). Let one of the (a1)--(a3) is satisfied and
  \[
 (u^{(0)},v^{(0)})\in\left\{(u,v)\in \mathbb{R}^2: 0< u< 1, \, 0<v\leq(1-u)(1+cu)\right\}\setminus U(E_{-}).
 \]
  From above discussion we conclude that the trajectory falls into $U(E_{-})$ after some finite steps and then goes to the fixed point $E_{-}=(u_{-},v_{-}).$  (as in Fig.\ref{fig1}). In cases (ii) and (iii), both sequences $u^{(n)}, v^{(n)}$ decrease to those as long as the trajectory falls into  $\left\{(u,v)\in \mathbb{R}^2: 0< u< 1, \, 0<v\leq(1-u)(1+cu)\right\}$ and then the situation is the same. The last case is very similar to the first case.
 The proof is complete.
\end{proof}

\begin{figure}
  \centering
  \includegraphics[width=10cm]{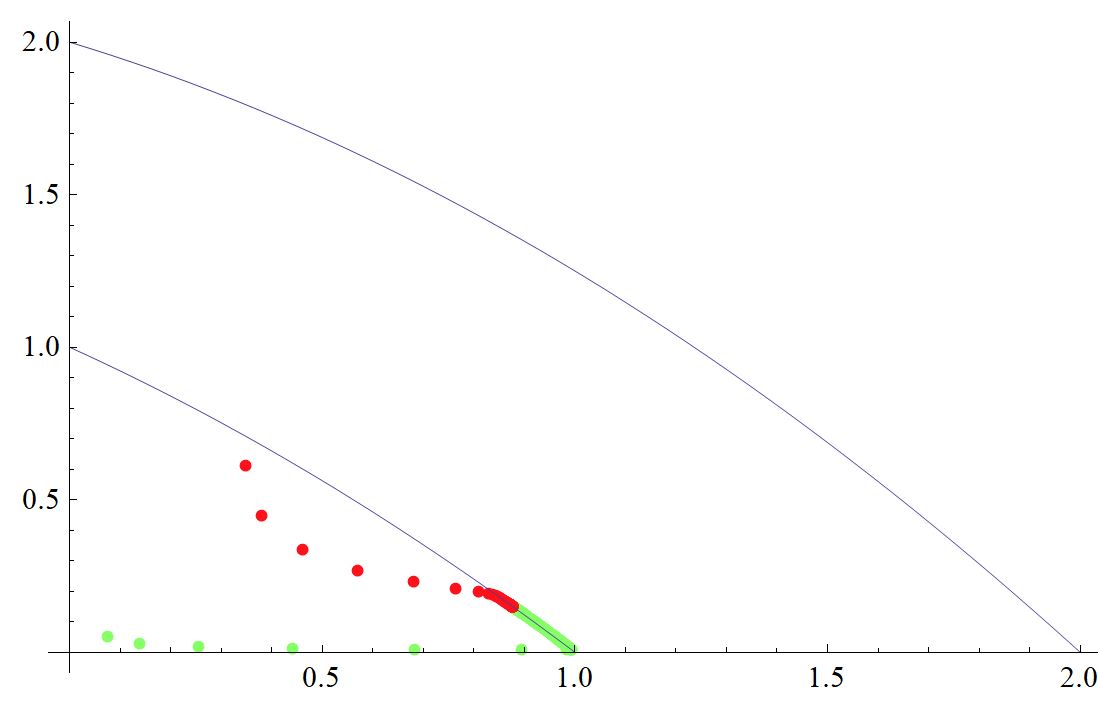}\\
  \caption{$\beta=1,  c=0.25, r=0.5, \theta=0.25, (u_{-},v_{-})\approx(0.876894, 0.150093)$. Green is the trajectory of initial point $u^{(0)}=0.04, v^{(0)}=0.1,$ while red is the trajectory of $u^{(0)}=0.4, v^{(0)}=0.8.$  }\label{fig1}
\end{figure}

\begin{pro} Let $r< c\theta,$ $(\sqrt{\theta}+\sqrt{rc})^2<\beta<(c+1)(r+\theta)$ and $q(u_{-})<1$ and $(u^{(0)},v^{(0)})$ be an initial point. If

(i) one of the conditions (a1)--(a3) is satisfied and
\[
(u^{(0)},v^{(0)})\in\left\{(u,v)\in \mathbb{R}^2: 0< u< 1, \, 0<v\leq(1-u)(1+cu)\right\};
\]

(ii) one of the conditions (a1)--(a3) is satisfied and
\[
(u^{(0)},v^{(0)})\in\left\{(u,v)\in \mathbb{R}^2: 0< u< u_{-}, \, (1-u)(1+cu)\leq v\leq2\right\};
\]

(iii) one of the conditions (a1)--(a3) is satisfied, $c\leq1/2$ and
\[
(u^{(0)},v^{(0)})\in\left\{(u,v)\in \mathbb{R}^2: 0< u< u_{-}, \, (1-u)(1+cu)\leq v\leq(2-u)(1+cu)\right\};
\]

(iv) the condition (b1) is satisfied and
\[
(u^{(0)},v^{(0)})\in\left\{(u,v)\in \mathbb{R}^2: \widehat{u}_{-}\leq u< u_{-}, \, 0<v\leq(1-u)(1+cu)\right\};
\]
Then the stable manifold of $(u_{+},v_{+})$ divides the set $M_4$ into two parts  $\Omega_1$ (which includes $E_{-}$) and $\Omega_2$ such that the trajectory converges to the fixed point $E_{-}=(u_{-},v_{-})$ if $(u^{(0)},v^{(0)})\in\Omega_1$ and the trajectory converges to the fixed point $(1,0)$ if $(u^{(0)},v^{(0)})\in\Omega_2.$
\end{pro}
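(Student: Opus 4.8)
The plan is to upgrade the monotone–convergence analysis of the two preceding propositions to the bistable regime: the new feature is that the strip in which $v^{(n)}$ increases now genuinely separates the two sinks, and the dividing set is forced to be the stable manifold of the saddle $E_+$. First I would record the local picture at the three relevant fixed points. By Proposition~\ref{prop2} the hypotheses $r<c\theta$, $(\sqrt\theta+\sqrt{rc})^2<\beta<(c+1)(r+\theta)$ produce exactly the two positive fixed points $E_-,E_+$ with $0<u_-<\overline u<u_+<1$ (here $u_+<1$ because $\Psi(1)=(1+c)(r+\theta)>\beta$ and $\Psi$ is increasing past $\overline u$). Under hypotheses (i)--(iv) one checks $\beta/(1+c)<r+\theta<2+\beta/(1+c)$, so $(1,0)=E_1$ is a sink by Proposition~\ref{prop1}; $q(u_-)<1$ makes $E_-$ a sink by the stability lemma for $E_-$; and one verifies from (\ref{bif1}), using that $u_+$ is the larger root of $c\theta u^2-(\beta-rc-\theta)u+r=0$ (equivalently $\Psi'(u_+)>0$), that $F(-1,u_+)=1+p(u_+)+q(u_+)>0$, so $E_+$ is a saddle with a one–dimensional local stable manifold $W^s_{\rm loc}(E_+)$ (tangent at $E_+$ to the stable eigenvector of $J_{V_1}(E_+)$) and a one–dimensional unstable manifold.

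Next I would reuse the monotone structure of $V_1$ on $M_4$. Exactly as before, $u^{(1)}-u$ has the sign of $(1-u)(1+cu)-v$ and $v^{(1)}-v$ has the sign of $\beta-\Psi(u)$; since $\beta=\Psi(u)$ precisely at $u=u_\pm$, with $\Psi>\beta$ on $(0,u_-)\cup(u_+,1)$ and $\Psi<\beta$ on $(u_-,u_+)$, the parabola $\Gamma:\ v=(1-u)(1+cu)$ — which passes through both $E_-$ and $E_+$ since $v_\mp=(1-u_\mp)(1+cu_\mp)$ — together with the lines $u=u_-$ and $u=u_+$ partitions $M_4$ into cells on which $u^{(n)}$ and $v^{(n)}$ are each strictly monotone. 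Boundedness of $M_4$, the absence of positive fixed points other than $E_\mp$, and the Sarkovskii argument already used (no periodic points) then force every forward trajectory contained in $M_4$ to converge to one of $E_-$, $E_+$, $(1,0)$. Hence $M_4$ is the disjoint union of $\operatorname{basin}(E_-)$, $\operatorname{basin}((1,0))$, and $W^s(E_+)$, the first two being open, so $W^s(E_+)$ is closed.

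Finally I would identify $\Omega_1,\Omega_2$. Starting from $W^s_{\rm loc}(E_+)$ and extending it by preimages, the monotonicity cells show that the two branches leave $E_+$ into $\{u<u_+\}$ and cross $M_4$ monotonically until they reach $\partial M_4$ — one meeting the upper boundary $v=(2-u)(1+cu)$, the other meeting $\{u=1\}$ or $\{v=0\}$ — so $W^s(E_+)$ is a single embedded arc cutting $M_4$ into exactly two components, $\Omega_1\ni E_-$ and $\Omega_2$. Since $W^s(E_+)$ is invariant and $V_1$ is locally invertible on $\operatorname{int}M_4$ (its Jacobian determinant, computed directly from (\ref{h1}), being nonzero and positive there), each $\Omega_i$ is forward invariant; a trajectory in $\Omega_1$ cannot enter $\operatorname{basin}((1,0))$ without crossing $W^s(E_+)$ and cannot converge to $E_+$, so by the trichotomy it converges to $E_-$, and symmetrically trajectories in $\Omega_2$ converge to $(1,0)$. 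The role of hypotheses (i)--(iv) is only to pin down which invariant region, $M_3$ or $M_4$ (and whether $c\le1/2$), is the set being split.

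The main obstacle is the global description of $W^s(E_+)$: its germ at the saddle is immediate, but proving that it continues as one embedded arc all the way to $\partial M_4$ — rather than spiralling, self–accumulating, or terminating inside $M_4$ — and that it genuinely separates $E_-$ from $(1,0)$ requires careful use of the monotonicity cells, and in discrete time one must additionally forbid a trajectory from ``jumping over'' the separatrix, which is exactly where invariance of $W^s(E_+)$ and orientation–preservation ($\det J_{V_1}>0$ on $\operatorname{int}M_4$) are needed. Verifying $F(-1,u_+)>0$ and $\det J_{V_1}>0$ on $\operatorname{int}M_4$ are the computational crux; everything else follows the monotone–convergence template of the preceding propositions.
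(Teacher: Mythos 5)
Your overall strategy is the same as the paper's: two attractors $E_{-}$ and $(1,0)$, the saddle $E_{+}$, its stable manifold as the separatrix of $M_4$, and the monotonicity regions cut out by the parabola $v=(1-u)(1+cu)$ and by the sign of $\beta-\Psi(u)$. The paper, however, obtains the saddle character of $E_{+}$ by citing Theorem 2.4 of \cite{Chen} (given $q(u_{-})<1$) and locates the stable curve only numerically, whereas you claim to \emph{verify} $F(-1,u_{+})>0$ from the fact that $u_{+}$ is the larger root, i.e.\ $\Psi'(u_{+})>0$. This step is wrong as stated: $\Psi'(u_{+})>0$ gives only $F(1,u_{+})=u_{+}(1-u_{+})\bigl(\tfrac{\beta}{(1+cu_{+})^2}-\theta\bigr)=\tfrac{(1-u_{+})(r-c\theta u_{+}^{2})}{1+cu_{+}}<0$, i.e.\ one eigenvalue exceeds $1$; whether the second eigenvalue lies in $(-1,1)$ or below $-1$ is decided by the sign of $F(-1,u_{+})=2+\tfrac{(1-u_{+})\left[2(1+2cu_{+})+r-c\theta u_{+}^{2}\right]}{1+cu_{+}}$, which is an independent condition. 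The paper's own proposition on $E_{+}$ (quoted from \cite{SH}) explicitly allows $E_{+}$ to be repelling when $F(-1,u_{+})<0$, and the hypotheses (a3), (b1) do not exclude large $\theta$, so saddleness must either be assumed or derived the way the paper does; your argument does not supply it.

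Two further assertions are where the real difficulty sits and are left unestablished. First, you use local invertibility and orientation preservation of $V_1$ on $\operatorname{int}M_4$ ($\det J_{V_1}>0$) to forbid orbits from ``jumping over'' the separatrix; but $\det J_{V_1}(u,v)=\bigl(2-2u-\tfrac{v}{(1+cu)^2}\bigr)\bigl(\tfrac{\beta u}{1+cu}+1-r-\theta u\bigr)+\tfrac{uv}{1+cu}\bigl(\tfrac{\beta}{(1+cu)^2}-\theta\bigr)$, whose first factor changes sign inside $M_4$ (e.g.\ near $u=1$ with $v$ close to the upper boundary), so positivity is not automatic and needs proof parameter class by parameter class --- without it the separation argument collapses, since for a noninvertible planar map an orbit can indeed cross a curve invariant only forward in time. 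Second, your trichotomy ``every orbit in $M_4$ converges to $E_{-}$, $E_{+}$ or $(1,0)$'' is justified by ``the Sarkovskii argument already used,'' but that argument was for the one-dimensional restriction to $M_1$ and says nothing about the planar map; monotonicity of $u^{(n)}$ and $v^{(n)}$ within each cell does not exclude orbits circulating among the cells, and the paper itself records that this family admits Neimark--Sacker invariant curves, so periodic orbits/invariant curves in $M_4$ must be ruled out under $q(u_{-})<1$ by an actual argument. With these three points open, your construction of $W^{s}(E_{+})$ as a single embedded arc reaching $\partial M_4$ and separating the two basins remains at the same heuristic level as the paper's proof (which resorts to a numerically computed curve), i.e.\ the proposal correctly identifies the main obstacle but does not overcome it, and the one computation it claims to settle ($F(-1,u_{+})>0$) is not a consequence of what is invoked.
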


\begin{proof} If $r<c\theta$ and $(\sqrt{\theta}+\sqrt{rc})^2<\beta<(c+1)(r+\theta)$ then according to the Lemma \ref{lemma1} and  Lemma \ref{lemma2} there exist two positive fixed points $E_{-}=(u_{-},v_{-})$, $E_{+}=(u_{+},v_{+}),$ and from $q(u_{-})<1,$  $\beta<(c+1)(r+\theta)$ we get that the fixed points $E_{-}=(u_{-},v_{-})$ and $(1,0)$ are attracting points. According to Theorem 2.4 in \cite{Chen}, if a fixed point $(u_{-},v_{-})$ is attractive, then the fixed point $(u_{+},v_{+})$ is a saddle fixed point. Then for the saddle point $(u_{+},v_{+})$ the parabola $(1-u)(1+cu)$ plays the role of an unstable manifold and there is another stable curve $\gamma$ which divides the set $M_4$ into two parts (as in \cite{Chen}). Finding this stable curve requires to find the solution of ODE $\dot{v}/\dot{u}$ passing through the point $E_{+}$ (in Fig.\ref{fig2} we tried to find it using a numerical solution.) Here also as in Proposition \ref{prop8}, first the trajectory approaches to the parabola $(1-x)(1+cx),$ and then it goes to one of the fixed points $E_{-}$ or $E_1.$  It is obvious that in a region containing a fixed point $E_{-}$, the trajectory converges to it, and in another region the trajectory converges to $(1,0)$.

\end{proof}

\begin{figure}[h!]
  \centering
  \includegraphics[width=7cm]{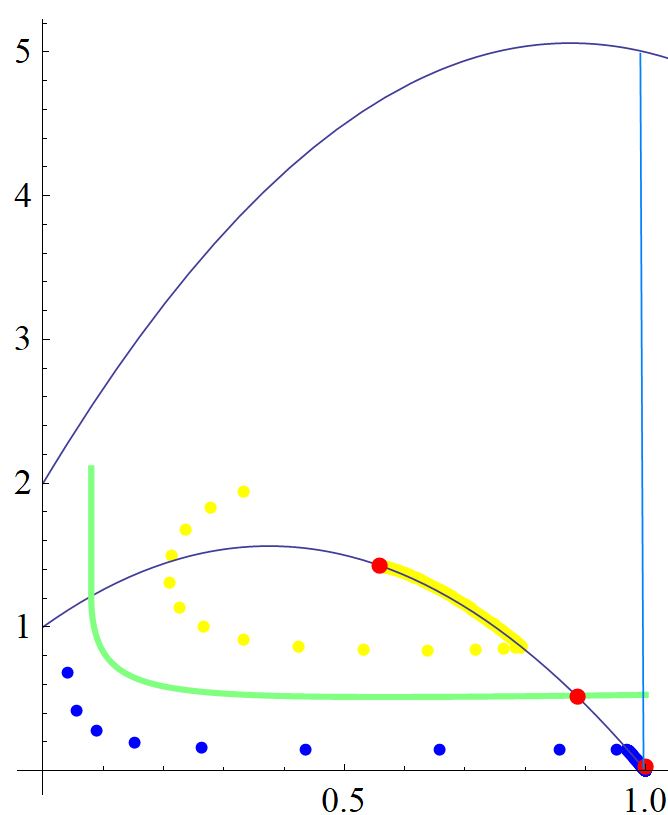}\\
  \caption{$\beta=3.7,  c=4, r=0.5, \theta=0.25, (u_{-},v_{-})\approx(0.564922, 1.41822), (u_{+},v_{+})\approx(0.885078, 0.521718)$. Yellow is the trajectory of initial point $u^{(0)}=0.4, v^{(0)}=2,$ while blue is the trajectory of $u^{(0)}=0.04, v^{(0)}=1.1.$ Green line is a stable manifold passing through $(u_{+},v_{+})$. }\label{fig2}
\end{figure}

\begin{rk} If $q(u_{-})>1$ then the fixed point $(u_{-},v_{-})$ is a repelling point. In this situation can occur the Neimark-Sacker bifurcation and appears an invariant closed curve about  $(u_{-},v_{-})$  when one of the parameters $\beta, r, \theta, c$ varies in a small neighbourhood of the value of that parameter which satisfies the equation $q(u_{-})=1.$ (In \cite{SH} it was shown w.r.t. parameter $\theta$).
\end{rk}

\begin{rk} For other cases of parameter values that were not considered, the convergence of the trajectory in $M_3$ depends not only on the parameters, but also on the initial point.
\end{rk}

\section{Case $h=2$}

In this case the operator (\ref{h12})  has the following form
\begin{equation}\label{h2}
V_{2}:
\begin{cases}
u^{(1)}=u(2-u)-\frac{u^2v}{1+cu^2}\\[2mm]
v^{(1)}=\frac{\beta u^2v}{1+cu^2}+(1-r)v-\theta uv.
\end{cases}
\end{equation}

\subsection{Fixed points and their stability} By simple calculations it can be shown that for the operator (\ref{h2}), Proposition \ref{prop1} is true for fixed points $(0,0)$ and $(1,0).$ Moreover, the sets $\{(u,v)\in \mathbb{R}_+^2: 0\leq u\leq2, v=0\}$ and $\{(u,v)\in \mathbb{R}_+^2: u=0, v\geq0\}$ are also invariant w.r.t. operator $V_2,$ and the dynamics in these sets is the same as in the case $h=1.$ By Lemmas \ref{lemma1} and \ref{lemma2}, the operator $V_2$ can have one ($E_{-}=(u_{-},v_{-})$) or two   ($E_{-}=(u_{-},v_{-})$ and $E_{+}=(u_{+},v_{+})$) positive fixed points. Let's study the type of these fixed points.
From the system (\ref{h2}) it is easy to see that $u_{\mp}$ are solutions between 0 and 1 of the following cubic equation:

\begin{equation}\label{cubic}
\theta c u^3+(rc-\beta)u^2+\theta u+r=0
\end{equation}
and
\begin{equation}\label{v}
v_{\mp}=\frac{(1-u_{\mp})(1+cu_{\mp}^2)}{u_{\mp}}.
\end{equation}

Simple calculation derives that the Jacobian of the operator (\ref{h2}) is

\begin{equation}\label{jac}
J(u_{\mp},v_{\mp})=\begin{bmatrix}
\frac{2cu_{\mp}^2(1-u_{\mp})}{1+cu_{\mp}^2} & -\frac{u_{\mp}^2}{1+cu_{\mp}^2}\\
\left(\frac{2\beta u_{\mp}}{(1+cu_{\mp}^2)^2}-\theta\right)v_{\mp} & 1
\end{bmatrix}
\end{equation}
and characteristic polynomial is

\begin{equation}\label{charpol}
\overline{F}(\lambda,u_{\mp})=\lambda^2-\left(1+\frac{2cu_{\mp}^2(1-u_{\mp})}{1+cu_{\mp}^2}\right)\lambda+\frac{2cu_{\mp}^2(1-u_{\mp})}{1+cu_{\mp}^2}+u_{\mp}(1-u_{\mp})\left(\frac{2\beta u_{\mp}}{(1+cu_{\mp}^2)^2}-\theta\right).
\end{equation}
We denote $\overline{F}(\lambda,u_{\mp})=\lambda^2-\overline{p}(u_{\mp})\lambda+\overline{q}(u_{\mp}),$ where
\begin{equation}\label{pq}
\overline{p}(u_{\mp})=1+\frac{2cu_{\mp}^2(1-u_{\mp})}{1+cu_{\mp}^2}, \quad
\overline{q}(u_{\mp})= \frac{2cu_{\mp}^2(1-u_{\mp})}{1+cu_{\mp}^2}+u_{\mp}(1-u_{\mp})\left(\frac{2\beta u_{\mp}}{(1+cu_{\mp}^2)^2}-\theta\right).
\end{equation}

We give the following useful lemma.
\begin{lemma}[Lemma 2.1, \cite{Cheng}]\label{keylem} Let $F(\lambda)=\lambda^2+B\lambda+C,$ where $B$ and $C$ are two real constants. Suppose $\lambda_1$ and $\lambda_2$ are two roots of $F(\lambda)=0.$ Then the following statements hold.
\begin{itemize}
 \item[(i)]  If $F(1)>0$ then
{\begin{itemize}
\item[(i.1)]
$|\lambda_1|<1$ and $|\lambda_2|<1$ if and only if $F(-1)>0$ and $C<1;$
\item[(i.2)]
 $\lambda_1=-1$ and $\lambda_2\neq-1$ if and only if $F(-1)=0$ and $B\neq2;$
\item[(i.3)]
$|\lambda_1|<1$ and $|\lambda_2|>1$ if and only if $F(-1)<0;$
\item[(i.4)]
$|\lambda_1|>1$ and $|\lambda_2|>1$ if and only if $F(-1)>0$ and $C>1;$
\item[(i.5)]
$\lambda_1$ and $\lambda_2$ are a pair of conjugate complex roots and $|\lambda_1|\!=\!|\lambda_2|\!=\!1$ if and only
       if $-2<B<2$ and $C=1;$
\item[(i.6)]
 $\lambda_1=\lambda_2=-1$ if and only if $F(-1)=0$ and $B=2.$
\end{itemize}}
\item[(ii)]  If $F(1)=0,$ namely, 1 is one root of $F(\lambda)=0,$ then the other root $\lambda$ satisfies
$|\lambda|=(<,>)1$ if and only if $|C|=(<,>)1.$
\item[(iii)]  If $F(1)<0,$ then $F(\lambda)=0$ has one root lying in $(1;\infty).$ Moreover,
\begin{itemize}
\item[(iii.1)]
 the other root $\lambda$ satisfies $\lambda<(=)-1$ if and only if $F(-1)<(=)0;$
\item[(iii.2)]
 the other root $\lambda$ satisfies $-1<\lambda<1$ if and only if $F(-1)>0.$
 \end{itemize}
\end{itemize}
\end{lemma}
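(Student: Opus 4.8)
The plan is to reduce the whole statement to two elementary identities for $F(\lambda)=\lambda^2+B\lambda+C$ with roots $\lambda_1,\lambda_2$: the Vieta relations $\lambda_1+\lambda_2=-B$, $\lambda_1\lambda_2=C$, and the factorizations
\[
F(1)=(1-\lambda_1)(1-\lambda_2),\qquad F(-1)=(1+\lambda_1)(1+\lambda_2).
\]
First I would record the dichotomy governed by $\Delta=B^2-4C$: if $\Delta\geq0$ both roots are real, while if $\Delta<0$ they are a conjugate pair $\lambda_2=\overline{\lambda_1}$, in which case $|\lambda_1|=|\lambda_2|=\sqrt{C}$ (hence $C>0$) and, since $\lambda_1\notin\mathbb{R}$, one has $F(1)=|1-\lambda_1|^2>0$ and $F(-1)=|1+\lambda_1|^2>0$ automatically. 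In particular a conjugate pair can occur only in case (i), and there it is equivalent to $C>B^2/4$.

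Next I would settle part (i) under the hypothesis $F(1)>0$. Item (i.5) is immediate from the dichotomy, since a conjugate pair of modulus $1$ means exactly $\Delta<0$ and $C=1$, equivalently $C=1$ and $B^2<4$. For (i.1): if $|\lambda_1|,|\lambda_2|<1$ then $C=|\lambda_1\lambda_2|<1$ and $F(-1)=(1+\lambda_1)(1+\lambda_2)>0$ (each $1+\lambda_i>0$ in the real case, and $F(-1)=|1+\lambda_1|^2>0$ in the complex case); conversely, if $F(-1)>0$ and $C<1$, the complex case gives $|\lambda_i|=\sqrt{C}<1$, while in the real case $F(1)>0$ puts both roots on one side of $1$ and $F(-1)>0$ on one side of $-1$, and $C<1$ rules out both roots lying in $[1,\infty)$ and both lying in $(-\infty,-1]$, leaving both in $(-1,1)$. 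Items (i.2) and (i.6) follow because $F(-1)=0$ says $-1$ is a root, so the other root is $-C$ and $B=1+C$, and the double root $\lambda_1=\lambda_2=-1$ occurs precisely when $C=1$, i.e. $B=2$. For (i.3), $F(-1)<0$ forces real roots with $-1$ strictly between them, and then $F(1)>0$ forces the larger root into $(-1,1)$ and the smaller below $-1$; the converse is the same sign analysis read backwards, using $F(1)>0$ to exclude the outside root from being $>1$. Finally (i.4) mirrors (i.1): $|\lambda_i|>1$ for both yields $C>1$, and the signs of $F(1),F(-1)$ force the roots to be both $>1$ or both $<-1$, so $F(-1)>0$; conversely $F(1)>0$, $F(-1)>0$, $C>1$ excludes (in the real case) the configuration with both roots in $(-1,1)$, leaving $|\lambda_i|>1$ for both.

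Parts (ii) and (iii) are short. If $F(1)=0$ then $\lambda_1=1$, so $\lambda_2=C$ is real and $|\lambda_2|=|C|$, which is the claimed trichotomy. If $F(1)<0$ then $(1-\lambda_1)(1-\lambda_2)<0$, which is impossible for a conjugate pair, so the roots are real with exactly one — say $\lambda_2$ — lying in $(1,\infty)$; then $1+\lambda_2>0$, hence $F(-1)=(1+\lambda_1)(1+\lambda_2)$ has the sign of $1+\lambda_1$, and $\lambda_1<-1$, $\lambda_1=-1$, $-1<\lambda_1<1$ correspond exactly to $F(-1)<0$, $F(-1)=0$, $F(-1)>0$. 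I do not expect a substantive obstacle here; the only real subtlety is the bookkeeping of strict versus non-strict inequalities at the boundary configurations (a root equal to $1$ or to $-1$), which is precisely why the strictness of $F(1)$ and of $F(-1)$ must be carried through every case. The device that keeps the whole argument transparent is the pair of factorizations above together with the observation that for a conjugate pair $F(\pm1)=|1\mp\lambda_1|^2$.
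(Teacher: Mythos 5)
Your proof is correct, but note that the paper itself offers nothing to compare it with: Lemma \ref{keylem} is quoted verbatim from \cite{Cheng} (Lemma 2.1 there) and is used as an imported tool, with no proof reproduced in this paper. Your argument — the factorizations $F(1)=(1-\lambda_1)(1-\lambda_2)$, $F(-1)=(1+\lambda_1)(1+\lambda_2)$, Vieta's relations, and the real/complex dichotomy via the discriminant with $|\lambda_1|=|\lambda_2|=\sqrt{C}$ and $F(\pm1)=|1\mp\lambda_1|^2>0$ in the complex case — is the standard and complete way to establish these Jury-type root-location criteria, and every item (i.1)--(i.6), (ii), (iii.1)--(iii.2) checks out, including the boundary bookkeeping in (i.2)/(i.6) via ``the other root is $-C$ and $B=1+C$.'' One small wording slip: in the forward direction of (i.4) you write that $|\lambda_1|,|\lambda_2|>1$ ``yields $C>1$,'' but in the real case this is not immediate from the moduli alone (roots on opposite sides of the origin, e.g.\ $2$ and $-3$, give $C<-1$); you need first to invoke $F(1)>0$ to force both roots onto the same side of $1$, hence both in $(1,\infty)$ or both in $(-\infty,-1)$, and only then conclude $C>1$ and $F(-1)>0$ — which is in fact what the rest of your sentence does, so the argument is easily repaired by reordering. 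With that tidied, your write-up would serve as a self-contained proof of the cited lemma.
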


\begin{pro}\label{type} For the fixed points $E_{\mp}=(u_{\mp},v_{\mp})$ of the operator (\ref{h2}), the followings hold true

(i)
$$E_{-}=\left\{\begin{array}{lll}
&{\rm attractive}, ~~& {\rm if} \ \  \overline{q}(u_{-})<1\\
&{\rm repelling}, ~~& {\rm if} \ \  \overline{q}(u_{-})>1\\
&{\rm nonhyperbolic}, ~~&  {\rm if}   \ \ \overline{q}(u_{-})=1,
\end{array}\right.$$

(ii) The fixed point $E_{+}=(u_{+},v_{+})$ is a saddle point if it exists.
\end{pro}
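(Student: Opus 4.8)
The plan is to compute the eigenvalues at $E_{\mp}$ directly from the characteristic polynomial $\overline{F}(\lambda,u_{\mp})=\lambda^{2}-\overline{p}(u_{\mp})\lambda+\overline{q}(u_{\mp})$ by applying Lemma \ref{keylem} with $B=-\overline{p}(u_{\mp})$ and $C=\overline{q}(u_{\mp})$, so that everything is governed by the two numbers $\overline{F}(1,u_{\mp})=1-\overline{p}(u_{\mp})+\overline{q}(u_{\mp})$ and $\overline{F}(-1,u_{\mp})=1+\overline{p}(u_{\mp})+\overline{q}(u_{\mp})$. Substituting (\ref{pq}), the first one collapses to $\overline{F}(1,u)=u(1-u)\bigl(\tfrac{2\beta u}{(1+cu^{2})^{2}}-\theta\bigr)$, and then, using the fixed-point identity $\beta=\Psi(u)$ and $\Psi'(u)=(\theta c u^{3}-\theta u-2r)/u^{3}$, one rewrites it as $\overline{F}(1,u_{\mp})=-\tfrac{(1-u_{\mp})u_{\mp}^{3}\Psi'(u_{\mp})}{1+cu_{\mp}^{2}}$, so that $\overline{F}(1,u_{\mp})$ carries the sign opposite to that of $\Psi'(u_{\mp})$. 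In the same way $\overline{F}(-1,u)=2\overline{p}(u)+\overline{F}(1,u)=2+\tfrac{4cu^{2}(1-u)}{1+cu^{2}}+u(1-u)\bigl(\tfrac{2\beta u}{(1+cu^{2})^{2}}-\theta\bigr)$.

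For $E_{-}$ I would use that $u_{-}$ is the smaller (or the unique) root of $\beta=\Psi(u)$ in $(0,1)$ while $\Psi$ is strictly decreasing on $(0,\overline{u})$, hence $u_{-}<\overline{u}$, so $\Psi'(u_{-})<0$ and $\overline{F}(1,u_{-})>0$. Then $\tfrac{2\beta u_{-}}{(1+cu_{-}^{2})^{2}}-\theta>0$, so every summand of $\overline{F}(-1,u_{-})$ is positive and $\overline{F}(-1,u_{-})>0$ as well. Now Lemma \ref{keylem}(i) applies at $u_{-}$: part (i.1) gives $|\lambda_{1}|,|\lambda_{2}|<1\Longleftrightarrow\overline{q}(u_{-})<1$, part (i.4) gives $|\lambda_{1}|,|\lambda_{2}|>1\Longleftrightarrow\overline{q}(u_{-})>1$, and when $\overline{q}(u_{-})=1$ the relation $\overline{F}(1,u_{-})>0$ forces $\overline{p}(u_{-})<2$ (and $\overline{p}(u_{-})>-2$ is obvious), so $-2<-\overline{p}(u_{-})<2$ and part (i.5) gives a conjugate pair of roots on the unit circle; these three alternatives are exactly the content of (i). The borderline case $\beta=\Psi(\overline{u})$ of Lemma \ref{lemma2}, in which the unique positive fixed point is $\overline{u}$ itself and $\overline{F}(1,\overline{u})=0$, is handled separately by Lemma \ref{keylem}(ii) and is non-hyperbolic directly.

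For $E_{+}$, note that this fixed point arises only in the regime of Lemma \ref{lemma2} in which there are two positive fixed points, where $\overline{u}<1$ and $u_{-}<\overline{u}<u_{+}<1$; hence $\Psi'(u_{+})>0$ and $\overline{F}(1,u_{+})<0$. By Lemma \ref{keylem}(iii) the polynomial then has a root $\lambda_{1}$ with $|\lambda_{1}|>1$, and by part (iii.2) the other root lies in $(-1,1)$, that is, $E_{+}$ is a saddle, precisely when $\overline{F}(-1,u_{+})>0$. Thus assertion (ii) reduces to the single inequality $\overline{F}(-1,u_{+})>0$, and proving it is where I expect the real difficulty to be.

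To establish $\overline{F}(-1,u_{+})>0$ I would clear the denominator and rephrase it as $N(u_{+})>0$, where
\[
N(u)=2(1+r)+(\theta-2r)u+(6c-\theta)u^{2}-c(\theta+4)u^{3}+\theta c\,u^{4},
\]
and then exploit that $u_{+}$ is a root of the cubic (\ref{cubic}), so that $\theta c u_{+}^{3}=(\beta-rc)u_{+}^{2}-\theta u_{+}-r$, together with the constraints $\beta<(c+1)(r+\theta)$ and $u_{+}\in(\overline{u},1)$ that single out this regime. Substituting the cubic relation removes the leading terms of $N$, leaving a lower-order expression that one would bound below using $\beta<(c+1)(r+\theta)$ and the consequence $c\theta u_{+}^{2}>(r+\theta)u_{+}+r$ (which follows from $g(1)>0$ for $g$ the left-hand side of (\ref{cubic})). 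This estimate is where all the hypotheses of Lemma \ref{lemma2} are genuinely used, and since $\overline{F}(-1,u_{+})>0$ is a tight inequality, it is the step I anticipate will require the most care.
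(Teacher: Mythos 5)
Part (i) of your argument is correct and is essentially the paper's proof: you establish $\overline{F}(1,u_-)>0$ from $u_-<\overline{u}$ (you do it via the identity $\overline{F}(1,u)=-\tfrac{(1-u)u^{3}\Psi'(u)}{1+cu^{2}}$ at a fixed point, the paper does it by comparing $\Psi$ with $\phi(x)=\tfrac{\theta(1+cx^{2})^{2}}{2x}$ — the same computation in different clothing), deduce $\overline{F}(-1,u_-)>0$, and then read off the three alternatives from Lemma \ref{keylem}(i.1), (i.4), (i.5), with $\overline{p}(u_-)<2$ forced by $\overline{F}(1,u_-)>0$ when $\overline{q}(u_-)=1$. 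Your separate remark about the degenerate case $\beta=\Psi(\overline{u})$ is a point the paper glosses over, and your reduction of (ii) to the single inequality $\overline{F}(-1,u_+)>0$ via Lemma \ref{keylem}(iii) also matches the paper.

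The gap is exactly where you predicted it: you never prove $\overline{F}(-1,u_+)>0$, and the route you propose — deriving $N(u_+)>0$ from the cubic \eqref{cubic} together with $\beta<(c+1)(r+\theta)$ and $u_+\in(\overline{u},1)$ alone — cannot succeed, because the inequality is false under just those hypotheses. Writing $N(u)=2+2r(1-u)+2cu^{2}(3-2u)-\theta u(1-u)(cu^{2}-1)$ and noting that $c\overline{u}^{2}=1+\tfrac{2r}{\theta\overline{u}}>1$, so $cu_+^{2}>1$ always, the $\theta$-term at $u_+$ is always negative and dominates for large $\theta$: e.g.\ $r=0.1$, $c=8$, $\theta=50$, $u_+=0.5$, $\beta=\Psi(0.5)=301.2$ satisfies $\overline{u}\approx0.355<u_+<1$ and $\Psi(\overline{u})<\beta<(c+1)(r+\theta)=450.9$, yet $\overline{F}(-1,u_+)=N(0.5)/(1+cu_+^{2})=-2.4/3=-0.8<0$, so $E_+$ is repelling, not a saddle. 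The paper closes this step by an extra hypothesis introduced inside its proof, namely $0<\theta<1$; under such a bound the inequality is immediate, since the only negative contribution to $\overline{F}(-1,u_+)=2+\tfrac{4cu_+^{2}(1-u_+)}{1+cu_+^{2}}+u_+(1-u_+)\bigl(\tfrac{2\beta u_+}{(1+cu_+^{2})^{2}}-\theta\bigr)$ is bounded below by $-\theta u_+(1-u_+)\geq-\theta/4$, giving $\overline{F}(-1,u_+)\geq2-\theta/4>0$. So to complete (ii) you must either import a smallness assumption on $\theta$ (as the paper does) or find some other restriction; no amount of manipulation of \eqref{cubic} and $\beta<(c+1)(r+\theta)$ alone will yield the inequality.
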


\begin{proof} First, we will show $\overline{F}(1,u_{-})>0.$ From (\ref{charpol}) we get that $\overline{F}(1,u_{-})>0$  is equivalent to $\beta>\frac{\theta(1+cu_{-}^2)^2}{2u_{-}}.$ In addition, for the existence of a positive fixed point by Lemmas \ref{lemma1} and \ref{lemma2} it is necessary that $\beta>\Psi(\overline{u}),$ where $\overline{u}$ is a positive solution (it is unique) of the equation (as in \cite{Chen})
\[
\theta cx^3-\theta x-2r=0.
\]
Let's denote by $\phi(x)=\frac{\theta(1+cx^2)^2}{2x}$ and consider $\Psi(x)-\phi(x):$

\begin{equation}
\Psi(x)-\phi(x)=\frac{(r+\theta x)(1+c x^{2})}{x^{2}}-\frac{\theta(1+cx^2)^2}{2x}=\frac{(1+cx^2)(-\theta cx^3+\theta x+2r)}{2x^2}.
\end{equation}
From this we obtain that the functions $\Psi(x), \ \ \phi(x)$ intersect in the first quadrant only at $\overline{u}.$  Moreover, $\theta cx^3-\theta x-2r<0$ for $x\in(0,\overline{u}),$ ($\Psi(x)$ \emph{is a decreasing}) and $\theta cx^3-\theta x-2r>0$ for $x>\overline{u}.$ ($\Psi(x)$ \emph{is an increasing}). It means that $\Psi(x)>\phi(x)$ for any $x\in(0,\overline{u}).$ Recall that $u_{\mp}$ is a solution of $\beta=\Psi(u),$  so $u_{-}<\overline{u}$ and $u_{+}>\overline{u}.$ (as in Fig.\ref{fig3}) Thus, we proved that $\beta>\frac{\theta(1+cu_{-}^2)^2}{2u_{-}}$ and $\beta<\frac{\theta(1+cu_{+}^2)^2}{2u_{+}}$ which means $\overline{F}(1,u_{-})>0$ and $\overline{F}(1,u_{+})<0.$  If $\overline{F}(1,u_{-})>0$  then $\overline{F}(-1,u_{-})>0$ is straightforward. In addition,  from $\overline{F}(1,u_{-})>0$ and  $\overline{F}(-1,u_{-})>0$ we obtain that $0<\overline{p}(u_{-})<1+\overline{q}(u_{-})$ which is sufficient that $-2<\overline{p}(u_{-})<2$ when $\overline{q}(u_{-})=1.$ Thus, according to Lemma \ref{keylem}, we obtain a proof of statement (i) of the proposition. Above we showed that $\overline{F}(1,u_{+})<0.$ For our further results we consider only the case $0<\theta<1,$ from this we obtain that $\overline{F}(-1,u_{+})>0.$ According to Lemma \ref{keylem}, we get that $E_{+}=(u_{+},v_{+})$ is a saddle point. The proof is complete.
\end{proof}
\begin{figure}[h!]
  \centering
  \includegraphics[width=6cm]{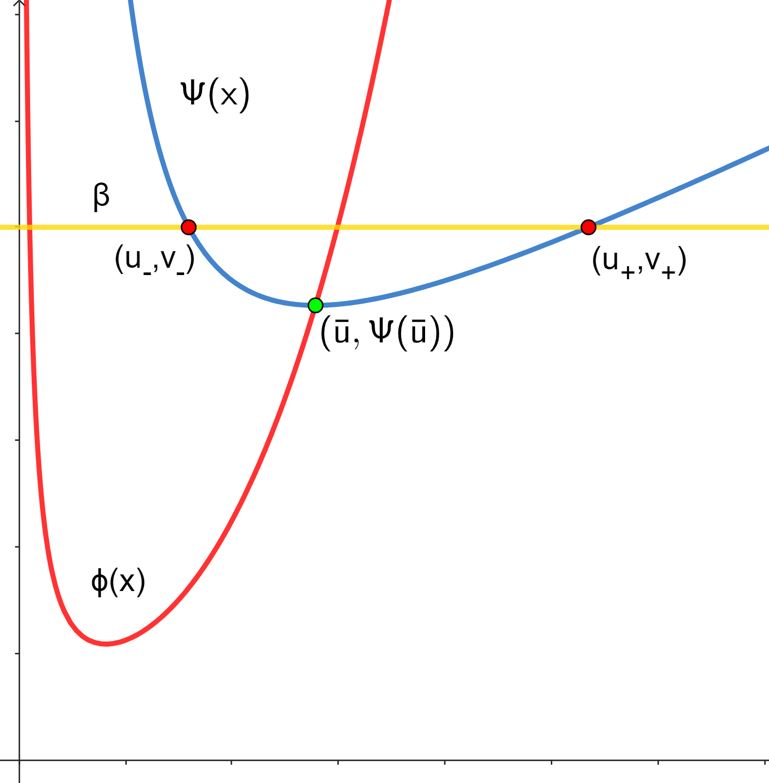}\\
  \caption{Blue line -- $\Psi(x)$, red line -- $\phi(x)$}\label{fig3}
\end{figure}

\subsection{Dynamics of (\ref{h2})}
\begin{pro}\label{invn} Let $r+\theta\leq1$ and $c\leq27/4$. If $\overline{u}\geq1,$  $0<\beta\leq(c+1)(r+\theta)$ or $\overline{u}<1,$  $0<\beta\leq\Psi(\overline{u})$ then the following set is an invariant w.r.t operator (\ref{h2}):

\[
N=\left\{(u,v)\in R^2: 0\leq u\leq1, \, 0\leq v\leq\frac{(2-u)(1+cu^2)}{u}\right\}.
\]
Moreover, the trajectory converges to the fixed point $(1,0)$ for any initial point $(u^{(0)}, v^{(0)})\in N.$
\end{pro}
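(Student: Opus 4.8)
The plan is to verify that $N$ is invariant by a direct one-step computation on each of the two defining inequalities, and then to deduce convergence to $(1,0)$ from the fact that the $v$-component is non-increasing along orbits in $N$ together with the absence of a positive fixed point.

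\textbf{Invariance of $N$.} Put $f(u)=u(2-u)$, $g(u)=\tfrac{(2-u)(1+cu^2)}{u}$ and $\gamma(u)=\tfrac{(1-u)(1+cu^2)}{u}$, so that $N=\{(u,v):0<u\le 1,\ 0\le v\le g(u)\}$ together with the (invariant) $v$-axis segment. Fix $(u,v)\in N$. The inequality $v\le g(u)$ is precisely $f(u)\ge\tfrac{u^2v}{1+cu^2}$, which gives $u^{(1)}\ge0$; and $u^{(1)}\le f(u)\le1$ since $f$ attains its maximum $1$ on $[0,1]$ at $u=1$. Writing $v^{(1)}=v\bigl(\tfrac{\beta u^2}{1+cu^2}+1-r-\theta u\bigr)$, the bracket is $\ge 1-r-\theta u\ge 1-(r+\theta)\ge0$ by $r+\theta\le1$, so $v^{(1)}\ge0$; and, using the recalled shape of $\Psi$ (decreasing on $(0,\overline{u})$, increasing afterwards, with $\Psi(1)=(c+1)(r+\theta)$), the hypothesis forces $\beta\le\Psi(u)$ for every $u\in(0,1]$ — the minimum of $\Psi$ over $(0,1]$ being $\Psi(1)$ if $\overline{u}\ge1$ and $\Psi(\overline{u})$ if $\overline{u}<1$. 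Since $\beta\le\Psi(u)$ is equivalent to $\tfrac{\beta u^2}{1+cu^2}\le r+\theta u$, the bracket is $\le1$, hence $v^{(1)}\le v$.

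It remains to check $v^{(1)}\le g(u^{(1)})$, which is where the bound $c\le 27/4$ enters. One has $g'(u)=2\bigl(c(1-u)-u^{-2}\bigr)$, so $g'(u)\le0\iff cu^2(1-u)\le1$; this holds because $\max_{[0,1]}u^2(1-u)=\tfrac{4}{27}$ (attained at $u=\tfrac{2}{3}$) and $c\cdot\tfrac{4}{27}\le1$, so $g$ is decreasing on $(0,1]$. Now split according to the sign of $u^{(1)}-u$. If $v\ge\gamma(u)$ then $\tfrac{u^2v}{1+cu^2}\ge u(1-u)$, so $u^{(1)}\le u$ and monotonicity of $g$ gives $g(u^{(1)})\ge g(u)\ge v\ge v^{(1)}$. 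If $v<\gamma(u)$ then $u\le u^{(1)}\le f(u)$, and I use the elementary inequality $\gamma(u)\le g(f(u))$ on $(0,1)$ — after clearing denominators it reduces to $cu\bigl[A\bigl(1-(2-u)^2\bigr)-u\bigr]\le1$ with $A=(1-u)^2+1>0$ and $1-(2-u)^2<0$, so the bracket is negative and the inequality is immediate — which, combined with $u^{(1)}\le f(u)$ and $g$ decreasing, gives $v^{(1)}\le v\le\gamma(u)\le g(f(u))\le g(u^{(1)})$. In both cases $(u^{(1)},v^{(1)})\in N$ (if $u^{(1)}=0$ the image lies on the invariant $v$-axis), so $N$ is invariant.

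\textbf{Convergence.} By the hypotheses and Lemmas~\ref{lemma1}--\ref{lemma2} the operator $V_2$ has no positive fixed point, and by Proposition~\ref{prop1} the point $(1,0)$ is attractive. Take $(u^{(0)},v^{(0)})\in N$ with $u^{(0)}>0$. By invariance and $v^{(n+1)}\le v^{(n)}$ the orbit remains in the compact set $N\cap\{v\le v^{(0)}\}$, and the sequence $v^{(n)}$ is non-increasing and bounded below, hence converges to some $v^\ast\ge0$. If $v^\ast>0$, then for any limit point $(a,v^\ast)$ of the orbit the image $V_2(a,v^\ast)$ is again a limit point with $v$-coordinate $v^\ast$, which forces $\tfrac{\beta a^2}{1+ca^2}+1-r-\theta a=1$; for $a>0$ this reads $\beta=\Psi(a)$, contradicting $\beta\le\Psi(u)$ on $(0,1]$ (strict outside the borderline equality cases, which are disposed of by a one-step check at $u=1$, resp.\ by assuming $\beta<\Psi(\overline{u})$), while $a=0$ would force $r=0$. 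Hence $v^\ast=0$, so every limit point of the orbit lies on $\{v=0\}$, where $V_2$ restricts to $f(u)=u(2-u)$, for which $1$ is globally attracting on $(0,2]$ (as already shown for $M_1$). Finally $u^{(n)}$ cannot approach $0$, since once $v^{(n)}$ is small a small value of $u^{(n)}$ would force $u^{(n+1)}>u^{(n)}$. Thus the limit set of the orbit equals $\{(1,0)\}$, i.e.\ $V_2^{n}(u^{(0)},v^{(0)})\to(1,0)$.

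\textbf{Expected main obstacle.} The one genuinely nontrivial step is the estimate $v^{(1)}\le g(u^{(1)})$ along the curved part of $\partial N$: it requires the sharp constant $c\le 27/4$ (to make $g$ monotone on $(0,1]$) together with the auxiliary inequality $\gamma(u)\le g(f(u))$ to control the region $u^{(1)}\ge u$; the remaining steps are routine bookkeeping with the known shape of $\Psi$ and with the dynamics of the logistic map $u\mapsto u(2-u)$.
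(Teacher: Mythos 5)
Your argument is correct, and on the invariance part it rests on the same backbone as the paper's proof (monotonicity of $g(u)=\tfrac{(2-u)(1+cu^2)}{u}$ on $(0,1]$ precisely when $c\le 27/4$, $v^{(1)}\ge 0$ from $r+\theta\le 1$, and $v^{(1)}\le v$ from $\beta\le\Psi(u)$ on $(0,1]$), but you carry it one step further than the paper does: the published proof only treats the case $v\ge\gamma(u)$ (so $u^{(1)}\le u$ and $g(u^{(1)})\ge g(u)\ge v\ge v^{(1)}$), and is silent on the complementary case $v<\gamma(u)$, where $u^{(1)}\ge u$ and the monotonicity of $g$ works in the wrong direction. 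Your auxiliary inequality $\gamma(u)\le g(f(u))$, combined with $u^{(1)}\le f(u)$, is exactly what is needed there, and your verification is sound (after clearing denominators the left-hand side $A c u\bigl(1-(2-u)^2\bigr)$ is nonpositive while the right-hand side $1+cu^2$ is positive), so your invariance argument is in fact more complete than the paper's. For the convergence, the paper argues informally (attractivity of $(1,0)$, monotone regions, ``the trajectory falls into $U$ after finitely many steps''), whereas you pass to the monotone limit $v^{(n)}\downarrow v^{*}$, exclude $v^{*}>0$ via limit points and the relation $\beta=\Psi(a)$, and then invoke the logistic dynamics on $\{v=0\}$; this is tighter and does not even require local attractivity of $(1,0)$, which actually fails at the borderline $\beta=(c+1)(r+\theta)$ where $E_1$ is nonhyperbolic. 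Two caveats, which you partly flag and the paper does not: when $\overline{u}<1$ and $\beta=\Psi(\overline{u})$, Lemma~\ref{lemma2} gives a positive fixed point lying in $N$, so the ``any initial point'' conclusion cannot hold at that borderline and your retreat to strict inequality is the right repair; and initial points with $u^{(0)}=0$, or lying exactly on the curve $v=g(u)$ (which is mapped to the $v$-axis), converge to $(0,0)$ rather than $(1,0)$ — an exceptional set neither your proof nor the paper's handles, and which the statement should exclude. Finally, your closing claim that $u^{(n)}$ cannot approach $0$ deserves one quantitative line (e.g.\ once $v^{(n)}\le\tfrac12$, any $u^{(n)}\le\tfrac14$ gives $u^{(n+1)}\ge u^{(n)}\bigl(2-u^{(n)}-v^{(n)}\bigr)\ge\tfrac54 u^{(n)}$), but that is routine and still above the level of detail of the published proof.
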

\begin{proof} Let $(u^{(0)}, v^{(0)})\in N.$ It is clear that $0\leq u^{(1)}\leq1.$  From $r+\theta\leq1$ we get that $v^{(1)}\geq0.$ By conditions $\overline{u}\geq1,$  $0<\beta\leq(c+1)(r+\theta)$ or $\overline{u}<1,$  $0<\beta\leq\Psi(\overline{u})$ it follows that $v^{(n)}$ is a decreasing sequence (according to Lemmas \ref{lemma1} and \ref{lemma2}). Let's consider the function $g(x)=\frac{(2-x)(1+cx^2)}{x}.$ Then $g'(x)=-\frac{2(cx^3-cx^2+1)}{x^2}$ and $g'(x)\leq0$ if and only if $cx^3-cx^2+1\geq0.$ From the last inequality we get $c\leq\frac{1}{x^2-x^3}$ and the function  $\frac{1}{x^2-x^3}$ has the minimum value $\frac{27}{4}$ in $(0,1)$ at the point $\frac{2}{3}.$ Thus, if $c\leq27/4$ then the function $g(x)$ is a decreasing function for all $x\in(0,1).$ According to the definition of $u^{(1)},$ if $v^{(0)}\geq\frac{(1-u^{(0)})(1+c(u^{(0)})^2)}{u^{(0)}}$ then $u^{(1)}\leq u^{(0)}.$ Consequently, we get that $v^{(1)}\leq v^{(0)} \leq  g(u^{(0)})\leq g(u^{(1)})$, i.e., $v^{(1)}\leq g(u^{(1)}),$ hence the set $N$ is an invariant set.

From the condition $r+\theta\leq1$ and $\beta\leq(c+1)(r+\theta)$ (or  $\beta\leq\Psi(\overline{u})<(c+1)(r+\theta)$) we obtain that the fixed point $(1,0)$ is an attracting point, so there exists a neighbourhood $U$ of $(1,0)$, such that the trajectory converges to  $(1,0)$ for all initial point from $U$.  Moreover, in this case there is no positive fixed point (according to Lemmas \ref{lemma1} and \ref{lemma2}). Note that the sequence $v^{(n)}$ is always decreasing, the sequence $u^{(n)}$ is increasing in $\left\{(u,v)\in \mathbb{R}^2: 0\leq u\leq1, \, 0\leq v\leq\frac{(1-u)(1+cu^2)}{u}\right\}$ and decreasing in $\left\{(u,v)\in \mathbb{R}^2: 0\leq u\leq1, \, \frac{(1-u)(1+cu^2)}{u}\leq v\leq\frac{(2-u)(1+cu^2)}{u}\right\}.$ To sum up above discussion, we conclude that the trajectory falls into $U$ after some finite steps and then converges to the fixed point $(1,0)$. The proof is complete.
\end{proof}

\begin{rk} Note that if the initial point is taken from
\[\left\{(u,v)\in \mathbb R^2: 1<u\leq2, \, 0\leq v\leq\frac{(2-u)(1+cu^2)}{u}\right\}
 \]
then after one step the trajectory falls in $N.$ Thus, it is enough to study the dynamics in $N.$
\end{rk}

Denote
\[
N_1=\left\{(u,v)\in \mathbb{R}^2: 0< u< 1, \, 0<v\leq\frac{(1-u)(1+cu^2)}{u}\right\},
 \]

  \[
N_2=\left\{(u,v)\in \mathbb{R}^2: 0< u< u_{-}, \, \frac{(1-u)(1+cu^2)}{u}\leq v\leq\frac{(2-u)(1+cu^2)}{u}\right\}.
\]

\begin{pro}\label{upfp} Let $h=2 $, $r+\theta\leq1,$ $c\leq27/4,$ $\beta>(c+1)(r+\theta)$ (or $\overline{u}<1$ and $\beta\in\{(c+1)(r+\theta),\Psi(\overline{u})\}$) and $\overline{q}(u_{-})<1$. If an initial point is taken from the set
$N_1\cup N_2$ then the trajectory converges to the unique positive fixed point $E_{-}=(u_{-},v_{-}).$
\end{pro}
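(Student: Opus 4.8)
The plan is to carry over, for $V_2$, the scheme used in the $h=1$ analogue and in Proposition \ref{invn}: invariance of a region, monotonicity of the two coordinates on subregions, local attractivity of $E_-$, and ``the orbit falls into a neighbourhood of $E_-$ after finitely many steps''. First I would identify the two separating loci inside $N$. Put $\gamma(u):=\frac{(1-u)(1+cu^2)}{u}$; rearranging the first line of (\ref{h2}) gives $u^{(1)}\gtrless u\iff v\lessgtr\gamma(u)$, so $u^{(n)}$ is increasing on $N_1$ (the part of $N$ below the graph of $\gamma$) and decreasing on the part of $N$ above it, and a short computation as for $g$ in the proof of Proposition \ref{invn} shows $\gamma$ is strictly decreasing on $(0,1)$ when $c\le 27/4$. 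Rearranging the second line gives $v^{(1)}\le v\iff\beta\le\Psi(u)$; since $\Psi(u_-)=\beta$, $\Psi$ decreases on $(0,\overline{u})$, $u_-<\overline{u}$ (both from Proposition \ref{type}) and $\Psi(1)=(c+1)(r+\theta)<\beta$, we obtain $\Psi(u)>\beta$ on $(0,u_-)$ and $\Psi(u)<\beta$ on $(u_-,1)$, so $v^{(n)}$ is decreasing while $u^{(n)}\in(0,u_-)$ and increasing while $u^{(n)}\in(u_-,1)$ (in the boundary subcase $\beta=\Psi(\overline{u})$ one has $\Psi\ge\beta$ on $(0,1)$, which only simplifies the $v$-dynamics). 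Finally, $E_-$ is the unique meeting point of the two loci --- it lies on $\{v=\gamma(u)\}$ at $u=u_-$ --- and $\overline{q}(u_-)<1$ together with Proposition \ref{type} makes it attractive, so there is a neighbourhood $U=U(E_-)$ all of whose forward orbits converge to $E_-$.

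By Proposition \ref{invn} the orbit of any $(u^{(0)},v^{(0)})\in N_1\cup N_2\subset N$ stays in $N$, so it suffices to show it enters $U$ after finitely many steps, and I would argue this region by region. While the orbit sits in $N_1\cap\{0<u<u_-\}$, the sequence $u^{(n)}$ is increasing and bounded above by $u_-$ while $v^{(n)}$ is decreasing and bounded below by $0$; both converge, and the limit is a fixed point of $V_2$ with $u\in(0,u_-]$, $v\ge0$ --- necessarily $E_-$, since the origin is excluded ($u^{(n)}\ge u^{(0)}>0$) and $E_+$, if it existed, would have $u_+>\overline{u}>u_-$ --- so the orbit enters $U$. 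If instead the orbit leaves this subregion I would follow it around $E_-$: a point of $N_1$ with $u\in(u_-,1)$ has $u^{(n)},v^{(n)}$ both increasing and, since $\gamma(u)\to0$ as $u\to1^-$ while no fixed point lies in $\{u_-<u<1\}$, must cross $\{v=\gamma(u)\}$ upward; above $\gamma$ with $u>u_-$ it moves up and to the left, and by the same ``no fixed point'' argument it leaves that region, either recrossing $u=u_-$ or recrossing $\gamma$; above $\gamma$ with $u<u_-$ both coordinates decrease until it re-enters $N_1\cap\{u<u_-\}$; and a point of $N_2$ has $u^{(n)},v^{(n)}$ both strictly decreasing, so since $\gamma(u)\to\infty$ as $u\to0^+$ it cannot stay above $\gamma$ and drops into $N_1\cap\{u<u_-\}$. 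Thus in every case the orbit returns to the region where the monotone argument applies.

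The step I expect to be the main obstacle is converting this ``circulation around $E_-$'' picture into a genuine proof that the orbit reaches $U$ --- that is, excluding a periodic orbit or an invariant closed curve encircling $E_-$ inside $N$, which is exactly what appears once $\overline{q}(u_-)>1$ (the Neimark--Sacker case), so attractivity of $E_-$ has to be used for more than just producing $U$. The cleanest route I see is to record the orbit at the successive times $n_k$ at which $u^{(n_k)}$ crosses the line $u=u_-$: by the $v$-monotonicities the values $v^{(n_k)}$ form a monotone, hence convergent, sequence, so the corresponding return points on $\{u=u_-\}$ converge to a point of that line, which must be the fixed point $E_-$; continuity together with attractivity of $E_-$ then forces the orbit into $U$ after finitely many steps. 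Alternatively one may argue, as in the $h=1$ case, that the coordinatewise monotonicities on the four subregions leave $E_-$ as the only candidate recurrent point, so the orbit has nowhere to accumulate but $E_-$ and therefore enters $U$.
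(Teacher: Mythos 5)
Your core argument is the same as the paper's: monotonicity of $u^{(n)}$ below/above the curve $v=\gamma(u)=\frac{(1-u)(1+cu^2)}{u}$, monotonicity of $v^{(n)}$ on either side of $u=u_-$ (via $\beta\lessgtr\Psi(u)$), local attractivity of $E_-$ from $\overline{q}(u_-)<1$ together with Proposition \ref{type}, and the conclusion that the orbit enters a neighbourhood $U(E_-)$ after finitely many steps. The paper's proof consists of exactly these ingredients and simply asserts the last step (for initial points in $N_2$ it only notes that both coordinates decrease until the orbit drops into $N_1$, and then that orbits from $N_1$ fall into $U(E_-)$). So up to the point where you flag the ``main obstacle'', you have reproduced the published argument, and your treatment of the subcase where the orbit stays in $N_1\cap\{u<u_-\}$ (monotone bounded coordinates, limit must be a fixed point, hence $E_-$) is in fact a bit more careful than the paper's.

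Two caveats about the extra material you add. First, your appeal to Proposition \ref{invn} for invariance of $N$ is outside its hypotheses: that proposition assumes $\beta\le(c+1)(r+\theta)$ or $\beta\le\Psi(\overline{u})$, the opposite of the present regime, and its invariance proof uses $v^{(1)}\le v$ on all of $N$, which now fails for $u>u_-$; the paper does not establish invariance of $N$ in this regime either. Second, and more importantly, your proposed patch does not close the gap you correctly identify. Recording $v$ at successive crossings of the line $u=u_-$ does not produce a monotone sequence: between a left-to-right crossing and the next right-to-left crossing $v$ increases, and on the following left excursion it decreases, so the crossing values zigzag, and their convergence would require comparing the gain on the right with the loss on the left, which the stated monotonicities do not control. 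Indeed, the monotonicity structure you use is independent of $\overline{q}(u_-)$, and when $\overline{q}(u_-)>1$ that same structure coexists with an invariant closed curve encircling $E_-$ (the Neimark--Sacker regime), so no argument based solely on these monotonicities can exclude circulation; the hypothesis $\overline{q}(u_-)<1$ would have to be used globally (e.g., via a Lyapunov-type function or an argument excluding invariant curves in $N$), which neither your sketch nor the paper's proof supplies. The alternative ``$E_-$ is the only candidate recurrent point'' suffers from the same defect. In short: your proposal matches the paper's route and is no less rigorous than it, but the additional step you offer to make it airtight does not work as reasoned.
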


\begin{proof} If $\beta>(c+1)(r+\theta)$ (or $\overline{u}<1$ and $\beta\in\{(c+1)(r+\theta),\Psi(\overline{u})\}$) the according to Lemmas \ref{lemma1} and \ref{lemma2} we have that there exists a unique positive fixed point $E_{-}=(u_{-},v_{-})$ and from $\overline{q} (u_{-})<1$ follows the attractiveness of $E_{-}.$  So there exists a neighbourhood $U(E_{-})$ of $E_{-}$, such that for any initial point taken from $U(E_{-}),$ the trajectory converges to $E_{-}.$  Note that $u_{-}$ is a unique positive solution in $(0,1)$ of the equation $\beta=\Psi(u)=\frac{(r+\theta u)(1+cu^2)}{u^2}.$ Thus, $\beta<\Psi(u)$ if $u\in(0,u_{-})$ and  $\beta>\Psi(u)$ if $u\in(u_{-},1),$  i.e., the sequence $v^{(n)}$ is decreasing in $\left\{(u,v)\in \mathbb{R}^2: 0< u< u_{-}, \, 0\leq v\leq\frac{(2-u)(1+cu^2)}{u}\right\}$ and it is increasing in $\left\{(u,v)\in \mathbb{R}^2: u_{-}<u<1, \, 0\leq v\leq\frac{(2-u)(1+cu^2)}{u}\right\}.$
It is obvious that the sequence $u^{(n)}$ is increasing in $N_1$  and decreasing in $N_2.$ So the trajectory starting from $N_1\setminus U(E_{-})$ falls into $U(E_{-})$ after some finite steps and then converges to  $E_{-}=(u_{-},v_{-}).$ If the initial point is taken from $N_2\setminus U(E_{-})$ then trajectory falls into $N_1$ after some finite steps and the situation is the same in $N_1.$ (Fig. \ref{fig4}). The proof is complete.
 \end{proof}

\begin{figure}[h!]
  \centering
  \includegraphics[width=9cm]{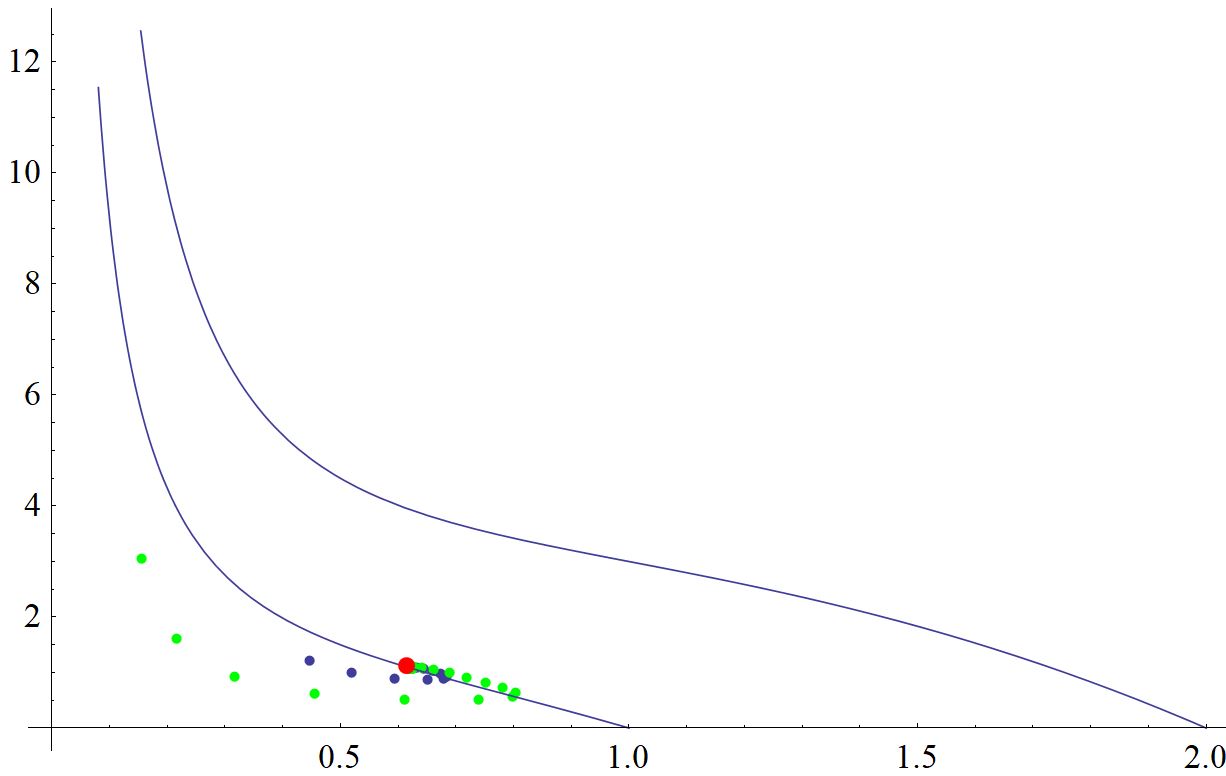}\\
  \caption{$\beta=3,  c=2, r=0.5, \theta=0.25, (u_{-},v_{-})\approx(0.623, 1.074)$. Blue is the trajectory of initial point $u^{(0)}=0.4, v^{(0)}=1.6,$ while green is the trajectory of $u^{(0)}=0.4, v^{(0)}=4.$ }\label{fig4}
\end{figure}

\begin{pro} Let $h=2,$ $r+\theta\leq1,$ $c\leq27/4,$ $\overline{u}<1,$ $\Psi(\overline{u})<\beta<(c+1)(r+\theta)$ and $\overline{q}(u_{-})<1.$ If the initial point is taken from the set $N_1\cup N_2$  then the stable manifold of $(u_{+},v_{+})$ divides the set $N$ into two parts  $\Omega_1$ (which includes $E_{-}$) and $\Omega_2$ such that the trajectory starting from $\Omega_1$ converges to the fixed point $E_{-}=(u_{-},v_{-})$ and the trajectory starting from $\Omega_2$ converges to the fixed point $E_{1}=(1,0).$
\end{pro}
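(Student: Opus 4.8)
The plan is to reproduce, in the cubic setting, the argument already used for the two-positive-fixed-point case when $h=1$ (the Proposition preceding the Neimark-Sacker remark). First I would assemble the structural facts. Since $\overline u<1$ and $\Psi(\overline u)<\beta<(c+1)(r+\theta)$, Lemmas \ref{lemma1} and \ref{lemma2} give exactly two positive fixed points $E_{-}=(u_{-},v_{-})$ and $E_{+}=(u_{+},v_{+})$, and the computation in the proof of Proposition \ref{type} locates them by $0<u_{-}<\overline u<u_{+}<1$. Because $r+\theta\leq1$ forces $0<\theta<1$, Proposition \ref{type} applies: the hypothesis $\overline q(u_{-})<1$ makes $E_{-}$ attractive and $E_{+}$ a saddle. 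Since $\beta<(c+1)(r+\theta)$ while $r+\theta\leq1<2$, Proposition \ref{prop1} (valid verbatim for $V_2$) makes $E_1=(1,0)$ attractive. Finally, $r+\theta\leq1$ and $c\leq27/4$ give, exactly as in Proposition \ref{invn}, that $v^{(1)}\geq0$ on $N$, that $g(u)=\frac{(2-u)(1+cu^2)}{u}$ is decreasing on $(0,1)$, and that $N$ is invariant; by the remark following Proposition \ref{invn} it is enough to work inside $N$.

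Next I would record the monotonicity structure inside $N$. Let $\Phi(u):=\frac{(1-u)(1+cu^2)}{u}$ be the $u$-nullcline, so that $u^{(1)}>u$, $=u$, $<u$ according as $v<\Phi(u)$, $=\Phi(u)$, $>\Phi(u)$, and note $v_{\mp}=\Phi(u_{\mp})$. Hence $u^{(n)}$ increases in $N_1$ and decreases in $N_2$. For the $v$-component, $v^{(1)}\leq v$ is equivalent to $\beta\leq\Psi(u)$; since $\Psi$ decreases on $(0,\overline u)$, increases on $(\overline u,\infty)$, and $\Psi(u_{\mp})=\beta$, the sequence $v^{(n)}$ is nonincreasing on $\{0<u\leq u_{-}\}\cup\{u_{+}\leq u<1\}$ and nondecreasing on $\{u_{-}<u<u_{+}\}$. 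As in the $h=1$ case, the graph of $\Phi$ plays the role of the unstable manifold of the saddle $E_{+}$, and its stable manifold $\gamma$ is found, as in \cite{Chen}, by solving the ODE $\dot v/\dot u$ through $E_{+}$. By Theorem 2.4 in \cite{Chen}, $\gamma$ is a curve crossing $N$ that separates it into two forward-invariant components $\Omega_1\ni E_{-}$ and $\Omega_2$.

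With this in hand the convergence follows as in Proposition \ref{upfp}. For $(u^{(0)},v^{(0)})\in N_1\cup N_2$, the monotonicity of $u^{(n)}$ together with the decrease of $v^{(n)}$ for $u<u_{-}$ drives the orbit toward the graph of $\Phi$, after which it lies in $\Omega_1$ or in $\Omega_2$. If it lies in $\Omega_1$, then, by the mechanism of Proposition \ref{upfp} and using that $\Omega_1$ contains no fixed point besides $E_{-}$ together with the $v$-monotonicity on either side of $u_{-}$, the orbit enters the attracting neighbourhood $U(E_{-})$ after finitely many steps and $V_{2}^{n}(u^{(0)},v^{(0)})\to E_{-}$. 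If it lies in $\Omega_2$, the orbit is eventually pushed past $u=u_{+}$, beyond which $v^{(n)}$ is nonincreasing while $u^{(n)}$ increases toward $1$, so it enters the attracting neighbourhood of $E_1=(1,0)$ and converges there.

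The delicate point, precisely the one the authors leave semi-formal in the $h=1$ case, is the claim that the stable manifold $\gamma$ of the saddle $E_{+}$ is a single curve spanning $N$ and splitting it into exactly two forward-invariant pieces. Since $V_2$ is not the time-one map of the planar ODE, identifying the separatrix with an ODE trajectory is only heuristic; a fully rigorous treatment would apply the stable-manifold theorem to $V_2$ at $E_{+}$, show that the local stable manifold extends to $\partial N$ without returning, and verify transversality with the nullcline $\Phi$. The remainder is just the monotonicity bookkeeping already developed for Propositions \ref{invn} and \ref{upfp}. I would therefore present this last step, as is done for $h=1$, by appeal to \cite{Chen} and a numerical illustration rather than a self-contained argument.
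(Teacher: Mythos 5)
Your proposal is correct and follows essentially the same route as the paper: existence of the two positive fixed points via Lemmas \ref{lemma1}--\ref{lemma2}, attractivity of $E_{-}$ and $E_1$ plus the saddle nature of $E_{+}$ via Proposition \ref{type} and Proposition \ref{prop1}, invariance and monotonicity from Propositions \ref{invn} and \ref{upfp}, and the separatrix $\gamma$ through $E_{+}$ treated semi-formally by appeal to \cite{Chen} and a numerical illustration. Your explicit flagging of the stable-manifold step as the non-rigorous point matches exactly what the paper itself leaves open.
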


\begin{proof} If $\overline{u}<1$ and $\Psi(\overline{u})<\beta<(c+1)(r+\theta)$ then according to Lemmas \ref{lemma1} and \ref{lemma2}, it follows that there exist two positive fixed points $E_{-}=(u_{-},v_{-})$, $E_{+}=(u_{+},v_{+})$ and from  $\overline{q}(u_{-})<1$ we have that the fixed point $E_{-}=(u_{-},v_{-})$ is an attracting point. Moreover, from  $r+\theta\leq1$  and $\beta<(c+1)(r+\theta)$ we get that the fixed point $(1,0)$ is also an attracting point. According to Proposition \ref{type}, the fixed point $(u_{+},v_{+})$ is a saddle point. Then for the saddle point $(u_{+},v_{+})$ the curve $\frac{(1-u)(1+cu^2)}{u}$ plays the role of an unstable manifold and there is another stable curve $\gamma$ which divides the set $N$ into two parts (in Fig.\ref{fig5} we tried to find $\gamma$ using a numerical solution). Here also as in Proposition \ref{upfp}, first the trajectory starting from $\Omega_1$ (resp. $\Omega_2$) approaches to the curve $\frac{(1-x)(1+cx^2)}{x},$ and then it goes to the fixed point $E_{-}$ (resp. $E_{1}$).
\end{proof}

\begin{figure}[h!]
  \centering
  \includegraphics[width=9cm]{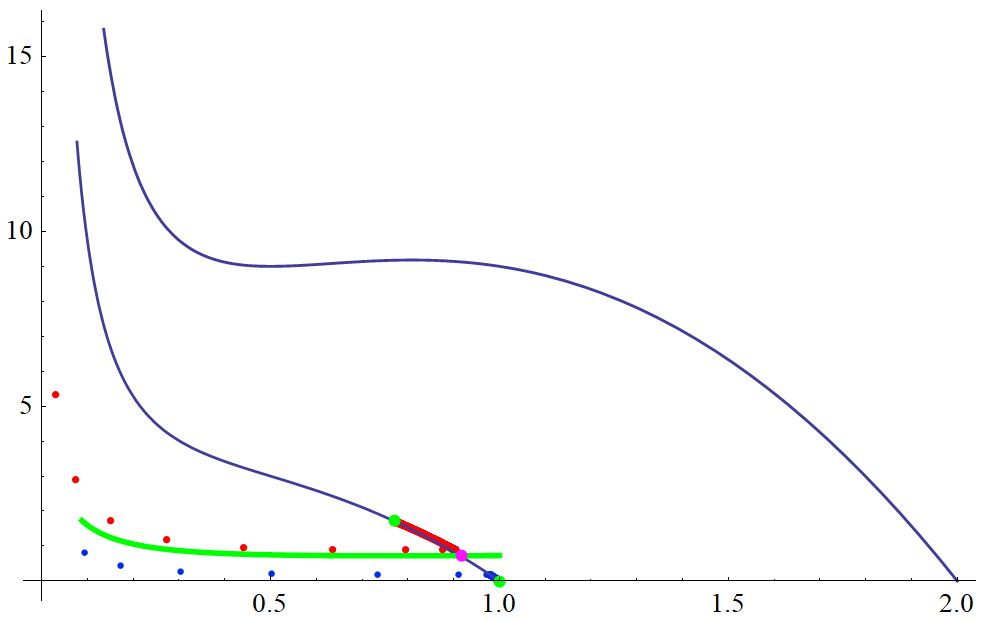}\\
  \caption{$\beta=6.7,  c=8, r=0.5, \theta=0.25, (u_{-},v_{-})\approx(0.784712, 1.625865), (u_{+},v_{+})\approx(0.913894, 0.723753)$. Red is the trajectory of initial point $u^{(0)}=0.1, v^{(0)}=10,$ while blue is the trajectory of $u^{(0)}=0.05, v^{(0)}=1.6.$ Green line is a stable manifold passing through $(u_{+},v_{+})$. }\label{fig5}
\end{figure}

\begin{rk} If $\overline{q}(u_{-})=1$ then the characteristic equation of Jacobian of the operator (\ref{h2}) has a pair of complex conjugate solutions (By Lemma \ref{keylem}).  In this situation can occur the Neimark-Sacker bifurcation and appears an invariant closed curve about  $(u_{-},v_{-})$  when one of the parameters $\beta, r, \theta, c$ varies in a small neighbourhood of the value of that parameter which satisfies the equation $\overline{q}(u_{-})=1.$ In Fig \ref{fig6} we will consider trajectories with some parameter changes.
\end{rk}

\begin{figure}[h!]
    \centering
    \subfigure[\tiny$c=11.3, u_{-}\approx0.371, \overline{q}(u_{-})\approx0.999, (0.3, 4).$]{\includegraphics[width=0.45\textwidth]{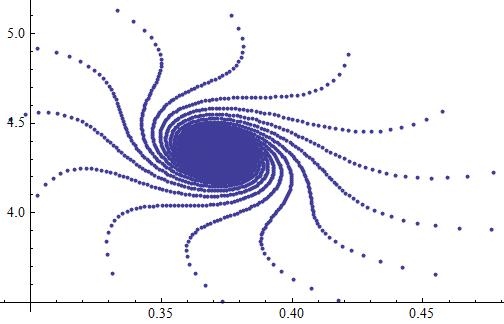}} \hspace{0.3in}
    \subfigure[\tiny$c=11.1, u_{-}\approx0.365, \overline{q}(u_{-})\approx1.003, (0.36, 4.3).$]{\includegraphics[width=0.45\textwidth]{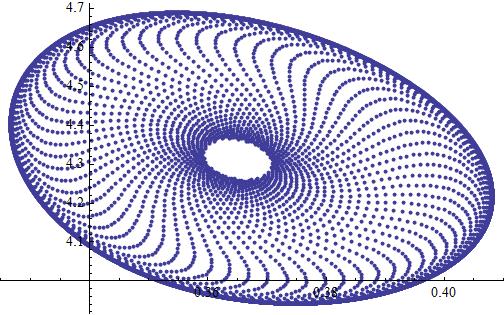}}
    \subfigure[\tiny$c=11.1, u_{-}\approx0.365, \overline{q}(u_{-})\approx1.003, (0.4, 5).$]{\includegraphics[width=0.45\textwidth]{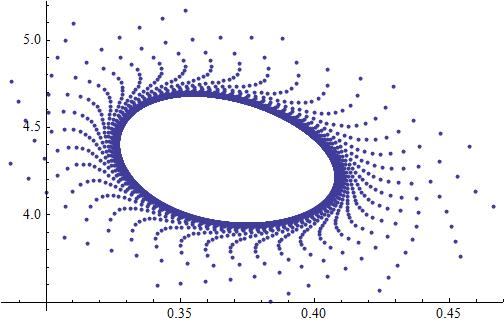}} \hspace{0.3in}
    \subfigure[\tiny Figures (b) and (c) in one]{\includegraphics[width=0.45\textwidth]{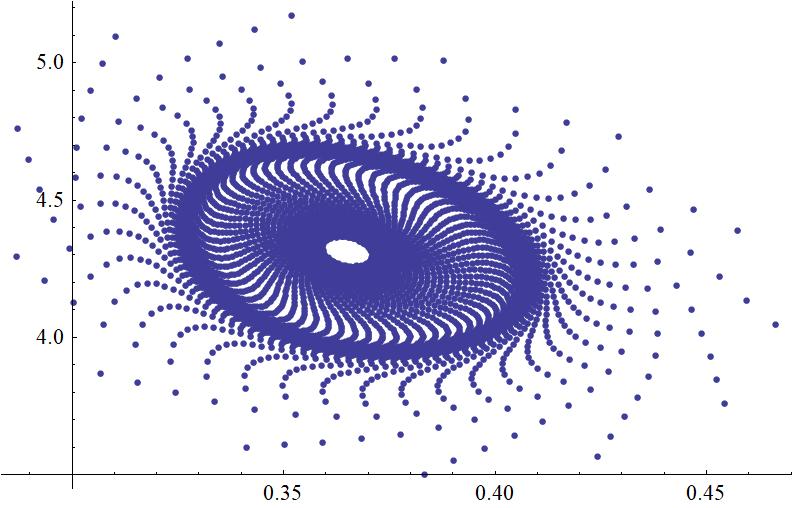}}
    \caption{Phase portraits for the system (\ref{h2}) with $\beta=11, r=0.5, \theta=0.25, n=10000.$}
    \label{fig6}
\end{figure}

\begin{rk} Let $0<r<1$. From the system (\ref{h2}), it can be seen that for the initial point $(u^{(0)},v^{(0)}):$

(1) if $u^{(0)}<0$ and $v^{(0)}>0$ then $u^{(n)}\rightarrow-\infty$ and $v^{(n)}\rightarrow\infty.$   It means that if the initial point is taken from the first quadrant and $v^{(0)}>\frac{(2-u^{(0)})(1+c(u^{(0)})^2)}{u^{(0)}}$, then the the trajectory goes to $\infty;$

(2) if $u^{(0)}>0,$  $v^{(0)}<0$  or $u^{(0)}<0,$  $v^{(0)}<0$ then $u^{(n)}\rightarrow-\infty$ and $v^{(n)}\rightarrow-\infty.$ It means that if $r+\theta>1$ then the trajectory may go to $\infty.$
\end{rk}

\begin{rk} For parameters $r+\theta>1$ or other parameter values satisfying $\overline{q}(u_{-})>1$, the convergence of the trajectory in $N$ depends not only on the parameters but also on the starting point.
\end{rk}

\section{Discussion}

In this paper, we investigated the discrete dynamics of a phytoplankton-zooplankton system characterized by Holling type II ($h=1$) and Holling type III ($h=2$) predator functional responses. In \cite{SH}, the existence and local stability of fixed points were examined for the case of $h=1$, revealing that the system undergoes a Neimark-Sacker bifurcation at one positive fixed point. Here, we explore the global stability of fixed points for both $h=1$ and $h=2$.

For the system with $h=2$, we analyze the type of  positive fixed points. The fixed point $E_{+}$ is identified as a saddle point if it exists, while the fixed point $E_{-}$ is attracting if $\overline{q}(u_{-})<1$. Note that this condition differs from that in the continuous case described in \cite{Chen}, where it is stated as $g'(u_{-})<0$. Additionally, it is remarked that, for the case of $h=2$ also a Neimark-Sacker bifurcation occurs at the fixed point $E_{-}$.

We note that if there exist two positive fixed points ($\theta\neq0$), the local stability of $E_{-}$ implies that the model occurs bistable phenomenon (as in \cite{Chen}), meaning toxin effects can induce bistable phenomena.

\section*{Acknowledgements}
The first author thanks to the International Mathematical Union (IMU-Simons Research Fellowship Program) for
providing financial support of his visit to the Paul Valery University, Montpellier, France.


\end{document}